\documentclass[11pt,a4paper]{amsart}
\usepackage[T1]{fontenc}
\usepackage[applemac]{inputenc}
\usepackage{amsmath,amssymb,amsthm}
\usepackage{mathtools}
\usepackage{xcolor}
\usepackage{hyperref}
\hypersetup{colorlinks=true,linkcolor=blue!60!black,citecolor=blue!60!black,urlcolor=blue!60!black}

\theoremstyle{plain}
\newtheorem{theorem}{Theorem}[section]
\newtheorem{proposition}[theorem]{Proposition}
\newtheorem{lemma}[theorem]{Lemma}
\newtheorem{corollary}[theorem]{Corollary}
\newtheorem{conjecture}[theorem]{Conjecture}
\theoremstyle{definition}

\theoremstyle{remark}
\newtheorem{remark}[theorem]{Remark}
\newtheorem{convention}[theorem]{Convention}

\newcommand{\PP}{\mathbb{P}}
\newcommand{\CC}{\mathbb{C}}
\newcommand{\ZZ}{\mathbb{Z}}
\newcommand{\NN}{\mathbb{N}}
\newcommand{\cO}{\mathcal{O}}
\newcommand{\Sym}{\operatorname{Sym}}
\newcommand{\Gr}{\operatorname{Gr}}
\newcommand{\Om}{\Omega^1_X}
\newcommand{\OmZ}{\Omega^1_Z}
\newcommand{\E}[2]{E_{#1,#2}T^*_X}
\newcommand{\Ez}[2]{E_{#1,#2}T^*_Z}
\newcommand{\Sch}[1]{\Gamma^{#1}}
\DeclareMathOperator{\GL}{GL}
\DeclareMathOperator{\SL}{SL}

\title[Obstructions to the ampleness of invariant jet differentials]{Obstructions to the ampleness of invariant jet differentials\\ on complete intersections}

\author{Simone Diverio}
\address{Dipartimento di Matematica Guido Castelnuovo, Sapienza Universit\`a di Roma, Piazzale Aldo Moro 5, I-00185 Roma, Italy}
\email{simone.diverio@uniroma1.it}

\thanks{S.~Diverio is partially supported by the Sapienza Universit\`a di Roma research project ``Algebro-geometric, analytic, and topological aspects of real and complex varieties'' (ALGEANT), Progetti di Ricerca di Ateneo 2025, CUP B83C26001140005.}
\date{\today}
\makeatletter
\@namedef{subjclassname@2020}{\textup{2020} Mathematics Subject Classification}
\makeatother
\subjclass[2020]{Primary 32Q45; Secondary 14M10, 14F10, 14J70, 13A50}
\keywords{Invariant jet differentials, Demailly--Semple jet bundles, ample vector bundles, complete intersections, Schur powers of the cotangent bundle, Br\"uckmann--Rackwitz vanishing theorem, Diverio--Trapani conjecture}

\begin{document}

\begin{abstract}
We show that on every smooth hypersurface $X\subset\PP^4_\CC$ the Demailly--Semple bundle $E_{3,5q}T^*_X$ of invariant jet differentials of order $3$ and weighted degree $5q$ is not ample, for every integer $q\ge1$. This contradicts the Diverio--Trapani conjecture in the form ``$E_{k,m}T^*_X$ is ample for all $m\gg0$'' for hypersurfaces in $\PP^4$ at $k=3$. We then extend the obstruction to complete intersections of any dimension, and to jet differentials of order $4$, and we discuss how ampleness of $E_{k,m}T^*_X$ depends on $m$.
\end{abstract}

\maketitle

\section{Genesis}

Sixteen years ago, together with Stefano Trapani, I had the audacity to publish a conjecture. For over a decade and a half, it sat there quietly---aside from a brilliant paper by Ya Deng in the Annales de l'\'Ecole Normale Sup\'erieure that briefly tricked us into believing we were mathematical visionaries.

Fast forward to last summer: AI models were demolishing legendary mathematical problems left and right. Naturally, inside the WhatsApp group of our Algebra and Geometry research group at Sapienza---a chat composed of supposedly mature academics behaving like high school pranksters---a running joke was born. While silicon giants were tackling humanity's hardest open problems, the mighty Diverio--Trapani Conjecture stood invincible. We liked to pretend Sam Altman was losing sleep over us.

Then my colleague and friend Giulio Tiozzo ---working in dynamical systems and quite foreign to complex algebraic geometry--- armed with a ChatGPT Pro account and zero respect for my ego, fed the conjecture to the machine. It took the AI precisely 7 minutes and 13 seconds to spit out a valid counterexample. Sixteen years of pride, completely dismantled in less time than it takes to whisk together a batch of pancake batter.

Once the laughter at my mathematical downfall settled, I actually examined what the algorithm had produced. To my annoyance, the counterexample wasn't just correct---it was genuinely interesting. Since Giulio's mathematical labor on this paper effectively peaked the second he hit `Enter', I stepped in to do what was left for a human: checking every step against the literature, stripping away the digital fluff, generalizing the result, and taking responsibility for it. 

This paper is the outcome of that humbling episode: one person provided the prompt, one chatbot provided the proof, and the other person spent a few days understanding it, making sure it was actually true and generalizing it at the best of his knowledge.

\subsection*{Who did what}

To be precise about who did what: ChatGPT-6 Astra produced the original argument for hypersurfaces of $\mathbb{P}^4$, namely the identification of the extremal quotient $\Gamma^{(2q,q,0)}\Omega^1_X$ of $E_{3,5q}T^*_X$ from Rousseau's generators and the contradiction with Br\"uckmann--Rackwitz vanishing, that is, Sections~\ref{sec:quotient}--\ref{sec:nonample} below in a rougher form. The rest came later and was done by me: pointing out that Rousseau's own proof, through Popov's theorem, gives the generators in every dimension, hence Theorem~\ref{thm:ci}; working out the order-four case from Merker's list of bi-invariants, hence Theorem~\ref{thm:k4}; observing what the multiplication maps $\mathrm{Sym}^lE_{k,m}\to E_{k,ml}$ do and do not give, hence Corollary~\ref{cor:divisible}; and writing Remark~\ref{rem:varym}.

\subsection*{Authorship}

An important (and serious) final note on authorship is due. Initially, caught up in the enthusiasm of the moment, we had even playfully considered listing ChatGPT-6 Astra alongside Giulio and myself as official co-authors---a lighthearted stunt intended, much like the tone of this Genesis, as a sarcastic commentary on the current academic climate and the modern rush toward AI-assisted publishing. While dropping the machine from the byline was an obvious given, Giulio's co-authorship prompted a far more genuine and thoughtful ethical discussion. After talking it over and reflecting deeper on the matter, we mutually agreed on this final configuration. We realized that in this emerging era of artificial intelligence, mathematicians---especially mature ones---must approach these tools with a clear ethical framework. Since his contribution was fundamentally the initial prompt---however brilliantly disruptive it proved to be---and the actual mathematical digestion, verification, generalization, and ultimate responsibility fell entirely on my shoulders, we felt it inappropriate for him to claim formal co-authorship. This decision underscores a crucial point: while machines can dismantle decades-old conjectures in minutes, the human responsibility of understanding, validating, and taking genuine ownership of the mathematics remains paramount.

\section{Statement, conventions, and the conjecture}\label{sec:statement}

In this section we fix our conventions on ampleness and on jet differentials, we recall the conjecture of Diverio and Trapani in its two published forms and discuss the quantifier on the weight $m$, which turns out to be the crux of the matter, we state the main results, and we put them in the perspective of Debarre's conjecture and of its solution. The section ends with an explicit account of which idea is new and which are merely assembled from the literature.

\subsection{Conventions}
All varieties and vector bundles are defined over $\CC$. A vector bundle $E$ on a projective variety is \emph{ample} if the tautological line bundle $\cO_{\PP(E)}(1)$ on the projective bundle of one-dimensional quotients of $E$ is ample \cite{Har66}. We shall use the following standard facts \cite[\S2--\S3]{Har66}, \cite[Chapter~6]{Laz04}:
\begin{enumerate}
\item[(H1)] every quotient bundle of an ample vector bundle is ample;
\item[(H2)] if $E$ is ample, then $\Sym^rE$ is generated by global sections for all $r\gg0$;
\item[(H3)] a quotient bundle of a globally generated bundle is globally generated; in particular a nonzero globally generated bundle has a nonzero global section.
\end{enumerate}

Let $X$ be a smooth complex variety of dimension $n$. Following Demailly \cite[\S6]{Dem97}, the bundle $\E{k}{m}$ of \emph{invariant jet differentials} of order $k$ and weighted degree $m$ is the vector bundle whose fibre over a point consists of the polynomials $P$ in the $k$-jet variables $(f',f'',\dots,f^{(k)})$, homogeneous of weighted degree $m$ with respect to the weights $\deg f^{(j)}=j$, which satisfy
\[
P\big(j_k(f\circ\varphi)\big)=\varphi'(0)^m\,P(j_kf)
\]
for every germ of biholomorphism $\varphi$ of $(\CC,0)$. It is a locally free sheaf, equal to the direct image $\pi_{k,0\,*}\cO_{P_kT_X}(m)$ of the tautological bundle on the Demailly--Semple tower \cite[Thm.~6.8~ii)]{Dem97}. Its transition functions are induced by the chain rule for changes of coordinates on $X$, which we recall in \S\ref{sec:quotient}. If $\E{k}{m}$ is ample for some $k,m>0$, then $X$ is Kobayashi hyperbolic \cite[Cor.~7.10]{Dem97}.

\begin{convention}
Throughout we use \emph{ordinary derivatives} $f^{(j)}(0)$ as jet variables, not Taylor coefficients divided by factorials. This affects numerical coefficients (e.g.\ the coefficient $3$ in \eqref{eq:gens} and \eqref{eq:chainrule}) and is the convention of \cite{Rou06,Mer10}.
\end{convention}

\subsection{The conjecture}\label{sec:conjecture}
Diverio and Trapani proposed the following generalisation of Debarre's conjecture on ample cotangent bundles.

\begin{conjecture}[{Diverio--Trapani, \cite[\S4]{DT10}}]\label{conj:DT}
Let $Z\subset\PP^N$ be the intersection of at least $N/(k+1)$ general hypersurfaces of sufficiently high degree. Then $\Ez{k}{m}$ is ample, and therefore $Z$ is Kobayashi hyperbolic.
\end{conjecture}

If $c$ denotes the codimension, the condition $c\ge N/(k+1)$ reads $k\ge N/c-1$. Deng states the conjecture as follows.

\begin{conjecture}[{Deng, \cite[Conj.~0.4]{Deng20}}]\label{conj:Deng}
Let $Z\subset\PP^N$ be the complete intersection of $c$ general hypersurfaces of sufficiently high degree. Then the invariant jet bundle $\Ez{k}{m}$ is ample provided that $k\ge N/c-1$ and $m\gg0$.
\end{conjecture}

The quantifier on $m$ matters for what follows. In \cite[\S4]{DT10} no quantifier is given at all. The sentence preceding the conjecture there recalls, after \cite[Cor.~7.10]{Dem97}, that ampleness of $\Ez{k}{m}$ \emph{for some} $k$ (and $m$) implies hyperbolicity, which suggests the weakest reading; on the other hand the analogy with Debarre's conjecture, where $E_{1,m}=\Sym^m\OmZ$ is ample for one $m$ if and only if it is ample for every $m$, suggests that the authors of \cite{DT10} simply had in mind ``for every $m$'', without noticing that for $k\ge2$ the quantifier is no longer innocuous (see Remark~\ref{rem:varym} for why it is not). Taken literally, ``for every $m\ge1$'' is untenable in the very range the conjecture was designed for: $E_{k,1}T^*_Z=\OmZ$ and $E_{k,2}T^*_Z=\Sym^2\OmZ$ for every $k$ (a polynomial of weight $2$ is $q(f')+\sum_ic_if_i''$, and the term $\varphi''\sum_ic_if_i'$ produced by reparametrisation forces $c=0$: the first invariant involving $f''$ is the Wronskian, of weight $3$), and these are not ample when $2c<N$ by Schneider's theorem \cite{Sch92} (see \S\ref{sec:perspective}); more generally $E_{k,m}=E_{k-1,m}$ for $m<k$, since $f^{(k)}$ has weight $k$. So ``for every $m$'' can only mean ``for every sufficiently large $m$'', which is the reading (A) below and the one adopted by Deng. We distinguish three readings, in decreasing order of strength:
\begin{enumerate}
\item[(A)] $\Ez{k}{m}$ is ample for \emph{all} sufficiently large $m$ (the literal reading of Conjecture~\ref{conj:Deng}, and presumably the intended reading of Conjecture~\ref{conj:DT});
\item[(A$'$)] there is an integer $M\ge1$ such that $\Ez{k}{m}$ is ample for all sufficiently large multiples $m$ of $M$;
\item[(B)] $\Ez{k}{m}$ is ample for \emph{some} $m\ge1$.
\end{enumerate}
Reading (A) is the natural one from the point of view of the Demailly--Semple tower, where $E_{k,m}T^*_Z$ is the direct image of $\cO_{Z_k}(m)=\cO_{Z_k}(1)^{\otimes m}$ and the weight is a mere multiple; it is exactly the transfer of this line-bundle intuition to the vector bundles $E_{k,m}T^*_Z$ which is unjustified, as Remark~\ref{rem:varym} explains: the obstruction found in Theorems~\ref{thm:ci} and~\ref{thm:k4} is sensitive to $m$ modulo $2k-1$, and whether ampleness itself is (that is, whether some $E_{k,m}T^*_Z$ in the remaining residue classes is ample) is not decided by our results.

\subsection{Perspective: Debarre's conjecture and its generalisations}\label{sec:perspective}
Conjecture~\ref{conj:DT} is the case $k\ge1$ of a story which begins with symmetric differentials. For $k=1$ one has $E_{1,m}T^*_Z=\Sym^m\OmZ$, whose ampleness is that of $\OmZ$, and the conjecture reduces to the following.

\begin{conjecture}[{Debarre, \cite[Conj.~18]{Deb05}}]\label{conj:Deb}
The cotangent bundle of the intersection in $\PP^N$ of at least $N/2$ general hypersurfaces of sufficiently high degrees is ample.
\end{conjecture}

Debarre was led to this statement by his theorem that the analogous property holds in abelian varieties \cite[Thm.~7]{Deb05}, and by a question of Schneider. The condition $c\ge N/2$ is necessary: Schneider proved that a smooth subvariety of $\PP^N$ with ample cotangent bundle has dimension at most equal to its codimension \cite{Sch92}, which is the case $\lambda=(m)$ of Theorem~\ref{thm:BR}. Partial results were obtained by Brotbek, for complete intersection surfaces \cite{Bro14} and, by an explicit construction with deformations of Fermat-type hypersurfaces, for $\operatorname{codim}Z\ge3\dim Z-2$ \cite[Thm.~C]{Bro16}; the conjecture was then proved independently by Brotbek--Darondeau \cite[Thm.~0.1]{BD18}, in any smooth projective variety, and by Xie \cite{Xie18}, with a uniform bound on the degrees, and Deng made the bounds effective \cite{Deng20}. Etesse subsequently proved the natural generalisation to Schur powers: $\Sch{\lambda}\OmZ$ is ample for a general complete intersection of high multidegree as soon as $(1+\ell)c\ge N$, where $\ell$ is the number of rows of $\lambda$, and this threshold is optimal by Theorem~\ref{thm:BR} \cite[Main Theorem and \S1]{Ete21}.

Conjecture~\ref{conj:DT} was proposed in \cite[\S4]{DT10} as the jet analogue of Conjecture~\ref{conj:Deb}, with the threshold $k\ge N/c-1$ modelled on the vanishing theorem for invariant jet differentials of \cite{Div08}, in the same way as Debarre's threshold $c\ge N/2$ is modelled on Schneider's obstruction: in both cases the guess is that ampleness sets in as soon as the elementary vanishing theorem no longer forbids sections. The present note shows that for jet bundles of order $k\ge3$ this guess is wrong when read as ampleness of the vector bundles $E_{k,m}T^*_Z$, at least for the weights $m$ divisible by $2k-1$, because the graded pieces of $E_{k,m}T^*_Z$ do not all have the same number of rows: the pieces which carry sections, and which Etesse's theorem makes ample, are the Wronskian-like pieces with many rows, whereas the extremal piece with respect to the degree in $f^{(k)}$, which is a quotient, has few rows and no sections. Nothing of the kind happens for $k=1$, where $\Sym^m\OmZ$ is irreducible, nor for $k=2$, where the extremal piece and the Wronskian coincide (Remark~\ref{rem:ordertwo}).

What survives of Conjecture~\ref{conj:DT} is therefore, on the one hand, the positivity on the Demailly--Semple tower away from singular jets, which is exactly Deng's theorem of almost $k$-jet ampleness for $k\ge N/c-1$ \cite[Thm.~C]{Deng20}, and on the other hand the ampleness of the individual Schur bundles with enough rows, which is Etesse's theorem. Whether some $E_{k,m}T^*_Z$ with $k\ge3$ and $m$ not divisible by $2k-1$ can be ample remains open; the extremal quotient of $E_{3,5q+1}T^*_Z$ computed in Remark~\ref{rem:5divides} has three rows, hence is ample for general $Z$ in the range of Etesse's theorem, so the present method cannot decide the question.

\subsection{Main results}
\begin{theorem}\label{thm:main}
Let $X\subset\PP^4_\CC$ be a smooth hypersurface of any degree $d\ge1$, and let $q\ge1$ be an integer. Then $\E{3}{5q}$ is not ample.
\end{theorem}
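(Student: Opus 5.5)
The strategy is to exhibit a quotient of $\E{3}{5q}$ that has no nonzero global sections, even after taking symmetric powers, and to derive a contradiction from (H1)--(H3). The quotient is the one named in the Genesis: the Schur power $\Sch{(2q,q,0)}\Om$ of the cotangent bundle of the threefold $X$.

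The first step is to recall Rousseau's description of $\E{3}{m}$ in dimension $3$. Fibrewise, the algebra $\bigoplus_m \E{3}{m}$ of invariant jet polynomials is generated by a short list of polynomials. These are the coordinates of $f'$ (weight $1$); the Wronskian-type $2\times2$ minors $w_{ij}=f_i'f_j''-f_j'f_i''$ (weight $3$); the $3\times3$ Wronskian $W$ in $(f',f'',f''')$ (weight $6$); and the weight-$5$ invariants of the form $f_i'(f_j'f_k'''-f_k'f_j''')-3f_i''(f_j'f_k''-f_k'f_j'')$, as in \eqref{eq:gens}. I filter $\E{3}{m}$ by the degree in $f'''$, or more precisely by the total degree in the $f''$ and $f'''$ variables, arranged so that the top piece is a quotient.

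The chain rule \eqref{eq:chainrule} shows how a coordinate change acts on jets. It sends $f'''$ to $g'\cdot f'''$ plus lower-order terms involving $f''$ and $f'$. Hence the submodule of polynomials of low $f'''$-degree is preserved, and the graded pieces are $\GL_3$-representations attached to functorial bundles $\Sch{\lambda}\Om$.

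For $m=5q$, the extremal piece is the one with maximal $f'''$-degree, namely $q$. The only way to reach it is through products of $q$ of the weight-$5$ generators. Modulo lower filtration, these transform like polynomials in $f'$ and in the Plücker coordinates of $f'\wedge f'''$ (or $f'\wedge f''$), so their span is the irreducible representation of highest weight $(2q,q,0)$. This yields a surjection $\E{3}{5q}\to\Sch{(2q,q,0)}\Om$. I expect this step to be the main obstacle. One must check that no other combination of generators, for instance $W$ times powers of $f'$, reaches $f'''$-degree $q$ at weight $5q$. Since $W$ has weight $6$ for $f'''$-degree $1$ while the weight-$5$ generators do better, the ratio $1/5$ is optimal and is attained only by the weight-$5$ generators. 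One must also check that the quotient is locally free and that the chain-rule corrections really do drop into the lower filtration.

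Now suppose $\E{3}{5q}$ were ample. By (H1), $\Sch{(2q,q,0)}\Om$ would be ample. By (H2), $\Sym^r$ of it would be globally generated for $r\gg0$. The bundle $\Sym^r\Sch{(2q,q,0)}\Om$ decomposes by plethysm into Schur powers $\Sch{\mu}\Om$ with $|\mu|=3qr$. Among them is the Cartan component $\Sch{(2qr,qr,0)}\Om$, which is a direct summand and hence a quotient. By (H3) it would therefore have a nonzero section. But $\lambda=(2qr,qr,0)$ has at most $2$ nonzero rows, and $X\subset\PP^4$ is a hypersurface with $n=3$, $c=1$. Theorem~\ref{thm:BR}, the Brückmann--Rackwitz vanishing, gives $H^0(X,\Sch{\lambda}\Om)=0$ whenever the number of rows $\ell$ satisfies $\ell<n/c$, equivalently $(1+\ell)c\le n$. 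Here $\ell=2<3$, so the vanishing applies and we reach a contradiction. Because the Cartan summand is already a quotient, there is no need to control every plethysm constituent. The argument uses no hypothesis on the degree $d$, so it holds for every $d\ge1$.
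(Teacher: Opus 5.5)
Your argument is, in its core, the paper's: the $\deg_{f'''}$ filtration is preserved by \eqref{eq:chainrule}, and its top piece is a quotient. The weight count on Rousseau's generators shows that at weight $5q$ only products of $q$ of the $A_{ij;\ell}$ reach $\deg_{f'''}=q$. Their leading parts $v_\ell(v_ib_j-v_jb_i)$ are $f''$-free and transform through $J$ alone. (The aside about the total degree in $(f'',f''')$ is unnecessary, since your weight count concerns $\deg_{f'''}$.)

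The difference is in the last step. You identify the quotient with $\Sch{(2q,q,0)}\Om$ and project $\Sym^r$ of it onto its Cartan summand $\Sch{(2qr,qr,0)}\Om$. Your sketch asserts this identification without proving it. It is true: products of symbols are killed by $\delta=\sum_iv_i\,\partial/\partial b_i$, whose kernel in $\Sym^{2q}\otimes\Sym^q$ is that irreducible summand (Remark~\ref{rem:schur}).

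The paper's proof avoids the identification altogether. It uses only three facts:
\begin{enumerate}
\item[(i)] $Q_q\subset\Sym^{2q}\Om\otimes\Sym^q\Om$;
\item[(ii)] multiplication gives a surjection $\Sym^rQ_q\twoheadrightarrow Q_{qr}$, since a product of $qr$ symbols is a product of $r$ products of $q$ symbols;
\item[(iii)] the whole ambient $\Sym^{2t}\Om\otimes\Sym^t\Om$ has no sections, by Pieri and Theorem~\ref{thm:BR}.
\end{enumerate}
Once the identification is known, that multiplication map is exactly your Cartan projection, so the two endgames agree. Yours is the variant mentioned in the remark after the paper's proof, with the Cartan summand replacing Laytimi--Nahm. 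It costs you the Schur identification and plethysm, whereas the paper's route needs nothing beyond the span description.

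The one genuine gap is the case $d=1$. Theorem~\ref{thm:BR} assumes degrees $\ge2$, so your closing claim that the argument uses no hypothesis on $d$ is wrong as stated. For $d=1$ one has $X\cong\PP^3$, and the vanishing $H^0(\PP^3,\Sch{(2qr,qr,0)}\Omega^1_{\PP^3})=0$ must be supplied separately. It follows from the dual Euler sequence: this embeds $\Omega^1_{\PP^3}$ in $\cO_{\PP^3}(-1)^{\oplus4}$, hence any degree-$3qr$ tensor bundle of $\Omega^1_{\PP^3}$ in a sum of copies of $\cO_{\PP^3}(-3qr)$, as in Lemma~\ref{lem:vanishing}.

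A minor slip, harmless here: ``$\ell<n/c$, equivalently $(1+\ell)c\le n$'' is an equivalence only for $c=1$. The correct general condition is $\ell c<n$, i.e.\ $(1+\ell)c<N$.
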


\begin{theorem}\label{thm:ci}
Let $Z\subset\PP^N_\CC$ be a smooth complete intersection of $c$ hypersurfaces of degrees $\ge2$, and assume $3c<N$. Then $\Ez{3}{5q}$ is not ample for every $q\ge1$.
\end{theorem}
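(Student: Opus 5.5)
The plan is to exhibit a quotient bundle of $\Ez{3}{5q}$ which is a Schur power of $\OmZ$ with at most two rows, namely $\Sch{(2q,q,0)}\OmZ$. We then show that no symmetric power of it has a nonzero section when $3c<N$. By (H1)--(H3) this rules out ampleness.

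\emph{Step 1: the extremal quotient.} We filter $\Ez{3}{5q}$ by the degree in the top jet variable $f'''$. The coordinate change formula, i.e.\ the chain rule \eqref{eq:chainrule}, shows that $f'''$ transforms into $g'''$ plus terms of lower order. Hence the part of maximal $f'''$-degree is a quotient bundle of $\Ez{3}{5q}$: the transition functions are block-triangular, and the top block is a quotient. To identify this top piece we use Rousseau's generators \eqref{eq:gens} of the algebra of invariant $3$-jets, which Rousseau's argument via Popov's theorem provides in every dimension $n$. Among the generators there is the weight-$5$ bi-invariant $w_5$, built from $f'\wedge f'''$ together with $3f''\wedge f''$-type corrections. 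Its leading part in $f'''$ is linear in $f'''$ and quadratic in $f'$, and it spans a copy of $\Sch{(2,1,0)}$, i.e.\ the $2$-row part of $f'\otimes f'\wedge f'''$.

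Now consider weighted degree $5q$. The invariants of maximal $f'''$-degree (degree $q$, since every generator containing $f'''$ has weight at least $5$) are exactly the products of $q$ copies of $w_5$-type generators. After passing to the associated graded, the top piece becomes the image of $\Sym^q\Sch{(2,1)}$ in the invariant algebra modulo lower $f'''$-degree. We must check, using Popov / Rousseau's description, that this image is precisely the Cartan component $\Sch{(2q,q,0)}\OmZ$. This works because the relevant coordinate ring of the flag-type variety is generated in degree one, so the highest-weight components multiply to the Cartan product. This identification is the main obstacle. I would first verify it at the level of the graded algebra: the top-degree pieces form the ring of $U$-invariants attached to the weight $(2,1)$, whose degree-$q$ part is $\Sch{(2q,q)}$. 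Then I would check that the quotient is functorial under the chain rule, i.e.\ that the terms in $f'''$ of lower degree drop out.

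\emph{Step 2: vanishing.} Suppose $\Ez{3}{5q}$ were ample. By (H1), $\Sch{(2q,q,0)}\OmZ$ would be ample. By (H2), $\Sym^r\Sch{(2q,q,0)}\OmZ$ would then be globally generated and nonzero for $r\gg0$. Decompose this bundle by plethysm: every irreducible summand $\Sch{\mu}\OmZ$ has $\mu$ with at most two nonzero rows, since its Young diagram is contained in the $2$-row range determined by $(2q,q)$, and (H3) makes each summand globally generated. So some $\Sch{\mu}\OmZ$ with $\ell(\mu)\le2$, $\mu\ne0$, would have a nonzero section. This contradicts Theorem~\ref{thm:BR} (Brückmann--Rackwitz): on a smooth complete intersection of codimension $c$ in $\PP^N$, $H^0(Z,\Sch{\mu}\OmZ)=0$ whenever $\ell(\mu)<N/c-1$, equivalently $(1+\ell)c<N$. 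With $\ell=2$ this is exactly the hypothesis $3c<N$. The hypothesis that all degrees are $\ge2$ is only needed so that the vanishing theorem applies in its stated form.
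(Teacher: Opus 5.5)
Your Step 1 matches the paper: the extremal quotient for the $f'''$-filtration, Rousseau's generators in every dimension via Popov, and the weight count showing that the top part is spanned by products of $q$ symbols $v_\ell(v_ib_j-v_jb_i)$. Step 2, however, rests on a false claim. It is not true that every irreducible summand of $\Sym^r\Sch{(2q,q,0)}\OmZ$ has at most two rows, because plethysm does not preserve the number of rows. Already for $q=1$, $r=2$ and $\dim V=3$ one has $\Sch{(2,1,0)}V\cong\mathfrak{sl}(V)\otimes\det V$, and
\[
\Sym^2\Sch{(2,1,0)}V\cong\Sch{(4,2,0)}V\oplus\Sch{(3,2,1)}V\oplus\Sch{(2,2,2)}V .
\]
So on a smooth hypersurface $X\subset\PP^4$ of degree $d$, the bundle $\Sym^2\Sch{(2,1,0)}\Om$ contains $\Sch{(2,2,2)}\Om=K_X^{\otimes2}=\cO_X(2d-10)$ as a direct summand. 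This summand has nonzero sections as soon as $d\ge5$. Hence ``$\Sym^r$ is globally generated, so some summand has a section'' gives no contradiction. You have to single out a specific two-row summand that is forced to have sections.

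The repair is short. The Cartan component $\Sch{(2qr,qr,0)}\OmZ$ is a direct summand of $\Sym^r\Sch{(2q,q,0)}\OmZ$, hence a quotient of it. By (H3) it is globally generated and nonzero, so it has a nonzero section, contradicting Theorem~\ref{thm:BR}.

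The paper gets the same conclusion more elementarily, with no plethysm and without needing $Q_q\cong\Sch{(2q,q,0)}\OmZ$. Multiplication gives a surjection $\Sym^rQ_q\twoheadrightarrow Q_{qr}$, since a product of $qr$ symbols is a product of $r$ products of $q$ symbols. Moreover $Q_{qr}\subset\Sym^{2qr}\OmZ\otimes\Sym^{qr}\OmZ$, all of whose Pieri constituents have two rows (Lemma~\ref{lem:vanishing}).

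That route also avoids the weak point in your identification step. Generation in degree one of the abstract algebra $\bigoplus_q\Sch{(2q,q,0)}$ does not by itself identify the subalgebra of $\CC[v,b]$ generated by the symbols with that Cartan algebra. It only guarantees that the image of the $q$-th symmetric power contains $\Sch{(2q,q,0)}$ (via the highest weight vector $(v_1(v_1b_2-v_2b_1))^q$). It does not exclude the other constituents $\Sch{(3q-j,j,0)}$, $j<q$, of $\Sym^{2q}\otimes\Sym^q$. What excludes them is the paper's observation that the symbols are killed by the derivation $\delta=\sum_iv_i\,\partial/\partial b_i$, whose kernel in bidegree $(2q,q)$ is irreducible.
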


\begin{theorem}\label{thm:k4}
Let $Z\subset\PP^N_\CC$ be a smooth complete intersection of $c$ hypersurfaces of degrees $\ge2$, and assume $4c<N$. Then $\Ez{4}{m}$ is not ample for every $m\ge7$ with $m\equiv0,1,2\pmod7$. In particular $\E{4}{m}$ is not ample for such $m$ on any smooth hypersurface $X\subset\PP^5$ of degree $\ge2$.
\end{theorem}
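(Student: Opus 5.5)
The plan is to copy the strategy used for Theorems~\ref{thm:main} and~\ref{thm:ci}. First, exhibit a quotient bundle $E_{4,m}T^*_Z\twoheadrightarrow\Sch{\lambda(m)}\OmZ$ whose partition $\lambda(m)$ has at most $3$ rows. Then argue by contradiction: an ample $E_{4,m}T^*_Z$ would have an ample Schur quotient by (H1), and this would produce a nonzero section of a Schur power of $\OmZ$ with at most $3$ rows. Theorem~\ref{thm:BR} forbids such a section when $(1+3)c<N$, i.e.\ when $4c<N$.

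\emph{Step 1 (extremal filtration).} Filter $E_{4,m}T^*_Z$ by the degree in the top jet variable $f^{(4)}$. By the chain rule, a change of coordinates sends $f^{(4)}$ to $d\Phi\cdot f^{(4)}$ plus terms in lower jets, and it never raises the $f^{(4)}$-degree. So the subspaces of polynomials of $f^{(4)}$-degree $\le a$ are subbundles. Let $a_{\max}(m)$ be the largest $f^{(4)}$-degree that occurs. Then the associated top graded piece is a quotient bundle, and its transition functions are those of the leading terms alone, so it is a $\GL(T_Z)$-homogeneous bundle. This is the analogue of the extremal quotient of Section~\ref{sec:quotient}.

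\emph{Step 2 (identify the top piece via Merker).} Using Merker's list of bi-invariants for $k=4$ (valid in every dimension via Popov's theorem, exactly as Rousseau's generators were in the proof of Theorem~\ref{thm:ci}), I will find the generators that carry $f^{(4)}$. Among them I will pick the one whose ratio of $f^{(4)}$-degree to weight is maximal. I expect this to be the weight-$7$ invariant linear in $f^{(4)}$, of the schematic form
\[
f'\wedge\bigl(f'f^{(4)}-\dots\bigr)\wedge\dots .
\]
Its leading part should span an irreducible $\GL_n$-module $\Sch{\mu}$ with at most $3$ rows, where $|\mu|$ equals the number of jet factors. Writing $m=7q+r$ with $r\in\{0,1,2\}$, the maximal $f^{(4)}$-degree is $q$. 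The top piece should then be the Cartan product of $q$ copies of $\Sch{\mu}$ with the leftover weight $r$. That leftover can only be filled by $f'$ (for $r=1$) or by $(f')^2$ (for $r=2$), since there is no invariant of weight $2$ involving $f''$. The result should be a single Schur bundle $\Sch{q\mu+(r)}\OmZ$, still with at most $3$ rows. Checking that the leftover residues $r=3,\dots,6$ would force a generator such as the Wronskian to contribute, and so more rows, explains the restriction $m\equiv0,1,2\pmod 7$.

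\emph{Step 3 (contradiction).} Suppose $E_{4,m}T^*_Z$ is ample. Then its quotient $\Sch{\lambda}\OmZ$, with $\lambda=q\mu+(r)$, is ample by (H1). By (H2), $\Sym^s\Sch{\lambda}\OmZ$ is globally generated for $s\gg0$. This bundle surjects onto its Cartan component $\Sch{s\lambda}\OmZ$, which is again globally generated by (H3), and it is nonzero because $\lambda$ has at most $\dim Z$ rows. Hence $H^0(Z,\Sch{s\lambda}\OmZ)\neq0$. But $s\lambda$ has at most $3$ rows, and $4c<N$, so this contradicts Theorem~\ref{thm:BR}. The statement for hypersurfaces in $\PP^5$ is the case $c=1$, $N=5$.

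\emph{Main obstacle.} The hard part is Step 2: proving that the top $f^{(4)}$-graded piece is exactly this Schur bundle, or at least surjects onto it. This requires showing that the leading terms of products of Merker's generators with maximal $f^{(4)}$-degree span precisely the Cartan product. In particular, one must rule out syzygies that kill the leading term, and rule out other generators, such as ones quadratic in $f^{(4)}$ with higher weight, that achieve the same ratio. My first attempt will be to evaluate the leading parts at a generic jet and compute the highest weight vector under the upper-triangular Borel subgroup, as was done for $\Sch{(2q,q,0)}$ in Section~\ref{sec:quotient}.
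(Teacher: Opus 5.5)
There is a genuine gap in Step~2, and it also breaks Steps~1 and~3.

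\textbf{The top piece is larger than you expect.} For every $m\neq7$ in the range, the top $f^{(4)}$-piece $G_m=\Ez{4}{m}/F_{q-1}$ is not the Cartan power $\Sch{q\mu+(r)}\OmZ$ built from the weight-$7$ invariant and powers of $f'$.
\begin{enumerate}
\item In Merker's list the extremal ratio $\deg_{f^{(4)}}/\mathrm{weight}=1/7$ is attained not only by $\Lambda^7$ but also by $Q^{14}$ (quadratic in $f^{(4)}$) and $X^{21}$ (cubic). These are exactly the generators you hoped to rule out; they exist.
\item The leftover weight $r\in\{1,2\}$ is absorbed not only by $f'$ but by every generator of defect $\mathrm{weight}-7\deg_{f^{(4)}}=1$, namely $D^8$, $M^8$, $R^{15}$. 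For instance $D^8(\Lambda^7)^{q-1}$ has weight $7q+1$ and $f^{(4)}$-degree $q$.
\end{enumerate}
Their leading parts, such as $v_1\det\nolimits_{123}(v,a,d)$, $3W_{12}(v_1d_2-v_2d_1)$ and $v_1^2(v_1d_2-v_2d_1)\det\nolimits_{123}(v,a,d)$, involve $a=f''$.

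\textbf{Consequences for Steps~1 and~3.} The claim in Step~1 that $G_m$ is $\GL(T_Z)$-homogeneous is false. The leading parts still transform through $\tilde a=Ja+H(v,v)$, so $G_m$ is only filtered by the degree in $f''$, with homogeneous graded pieces. Moreover, the piece you aim at, spanned by products of (polarised) $\Lambda^7_{\rm top}=v_1^2(v_1d_2-v_2d_1)$ and $v_1$, is the $f''$-degree-zero part. It is a \emph{subbundle} at the bottom of this filtration, not a quotient. For $m=14$, the transition with $J=\mathrm{id}$, $H\neq0$ sends $Q^{14}_{\rm top}$ to itself plus $v_1^2(v_1d_2-v_2d_1)\det\nolimits_{123}(v,H(v,v),d)$, which is a nonzero element of $\Sch{(6,2)}$. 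So this piece does not split off, and Step~3 has no Schur quotient $\Sch{q\mu+(r)}\OmZ$ to act on. Your explanation of the restriction $m\equiv0,1,2\pmod7$ (``only $f'$ can fill the leftover'') is wrong for the same reason. The real reason is that every generator has defect $0$, $1$ or $\ge3$; for example $W^{10}=\det(v,a,b,d)$ has defect $3$.

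\textbf{The missing idea.} What bounds the number of rows without identifying $G_m$ is this: \emph{none} of the seven extremal leading parts ($\Lambda^7,Q^{14},X^{21}$ of defect $0$; $f',D^8,M^8,R^{15}$ of defect $1$) involves $b=f'''$. The paper proceeds as follows.
\begin{enumerate}
\item $G_m$ embeds in $B_{3q+r,q}=E^{GG}_{2,3q+r}T^*_Z\otimes\Sym^q\OmZ$, the polynomials in the three vectors $f',f'',f^{(4)}$.
\item The $f''$-degree filtration of $B_{p,t}$ has graded pieces $\Sym^{\alpha}\OmZ\otimes\Sym^{\beta}\OmZ\otimes\Sym^{\gamma}\OmZ$, all of whose constituents have at most three rows. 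Theorem~\ref{thm:BR} and a filtration argument then give $H^0(Z,B_{p,t})=0$ for $(p,t)\neq(0,0)$ when $4c<N$.
\item The conclusion uses the multiplication map $\Sym^\ell G_m\to B_{\ell(3q+r),\ell q}$ rather than a Cartan component. This map is nonzero on $\big(v_1^r(\Lambda^7_{\rm top})^q\big)^{\otimes\ell}$, so its globally generated image would yield a forbidden section.
\end{enumerate}

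\textbf{Possible repair of your route.} Once $f'''$-freeness is known, you could instead pass to the extremal quotient of $G_m$ for the $f''$-degree. That quotient is a genuine quotient of $\Ez{4}{m}$, is $\GL$-homogeneous, and so is a direct sum of Schur bundles with at most three rows. For $m=14$ it is $\Sch{(4,2,1)}\OmZ$, not $\Sch{(6,2)}\OmZ$. You could then run your Step~3 on one summand. Either way, the $f'''$-freeness of all the extremal leading parts is the step your proposal lacks.
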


Since $N/c-1\le3$ if and only if $N\le4c$, Theorem~\ref{thm:ci} shows that readings (A) and (A$'$) fail at $k=3$ for every $(N,c)$ with $3c<N\le4c$, for instance $(4,1)$, $(7,2)$, $(8,2)$, $(10,3)$, $(11,3)$, $(12,3)$; likewise Theorem~\ref{thm:k4} shows that they fail at $k=4$ for every $(N,c)$ with $4c<N\le5c$, for instance $(5,1)$, $(9,2)$, $(10,2)$, $(13,3)$, $(14,3)$, $(15,3)$ (Corollaries~\ref{cor:conj} and~\ref{cor:conj4}). The case $(5,1)$ is the first one in which the hypersurface version of the conjecture is contradicted at its own threshold $k=N-1$ beyond $\PP^4$. Together, the two ranges cover every dimension: for every $n\ge3$ there is a codimension $c$ with $n/4\le c<n/2$ for which the conjecture fails at its own threshold order $\lceil n/c\rceil$ (Corollary~\ref{cor:alldim}). They say nothing about reading (B) at the remaining weights, and the restrictions $5\mid m$, resp.\ $m\equiv0,1,2\pmod7$, cannot be removed by the present method (Remark~\ref{rem:5divides}); what can be said is that a weight $m$ at which reading (B) held would be one at which the multiplication map $\Sym^5E_{3,m}T^*_Z\to E_{3,5m}T^*_Z$, resp.\ $\Sym^7E_{4,m}T^*_Z\to E_{4,7m}T^*_Z$, is not surjective (Corollary~\ref{cor:divisible}). They are, so to speak, transversal to Deng's theorems \cite{Deng20}: Deng proves positivity on the Demailly--Semple tower away from the singular jets, and the obstruction found here is read off along the vertical divisor of the tower, which lies in the singular jets, where his notion of almost $k$-jet ampleness is designed to be blind (Proposition~\ref{prop:divisor} and Remarks~\ref{rem:Deng} and~\ref{rem:vertical}).

\subsection{Organisation}
Section~\ref{sec:quotient} constructs, on a threefold, a nonzero quotient bundle $Q_q$ of $\E{3}{5q}$ embedded in $\Sym^{2q}\Om\otimes\Sym^q\Om$, with surjections $\Sym^rQ_q\twoheadrightarrow Q_{qr}$. Section~\ref{sec:vanishing} recalls the Br\"uckmann--Rackwitz theorem. Section~\ref{sec:nonample} proves Theorem~\ref{thm:main}. Section~\ref{sec:general} extends the construction to all dimensions, using Popov's polarisation theorem as in Rousseau's proof, and proves Theorem~\ref{thm:ci}. Section~\ref{sec:k4} treats order four, using Merker's computation of the invariant $4$-jet algebra in dimension $4$, and proves Theorem~\ref{thm:k4}. Section~\ref{sec:remarks} collects final remarks on the scope of the argument: the role of the residue class of $m$ modulo $2k-1$, the dependence of ampleness on the weight $m$, the relation with Deng's results, dimension two, and orders $k\ge5$.

\subsection{What is new and what is not}\label{sec:new}
It is worth saying clearly, from the outset, where the novelty lies. All the ingredients are in the literature: Rousseau's generators of the invariant $3$-jet algebra, his description of $\Gr^\bullet\E{3}{m}$ and his use of Popov's theorem \cite{Rou06}; Merker's generators in order $4$ \cite{Mer10}; the Br\"uckmann--Rackwitz vanishing theorem \cite{BR90}; Diverio's strategy of applying the latter to the graded pieces of a filtered jet bundle \cite{Div08}; and Etesse's remark that Schur powers with few rows are never ample on complete intersections of small codimension \cite{Ete21}. Each of these had been available for years, and the row-count of the graded pieces of $E_{k,m}T^*_Z$ is something every reader of \cite{Div08} has in front of him.

The one idea which is not in the literature, and which is due to ChatGPT-6 Astra (see the Genesis), is to look at the filtration of $E_{k,m}T^*_Z$ by the degree in the highest derivative \emph{from the top rather than from the bottom}: the piece of maximal degree in $f^{(k)}$ is a \emph{quotient} of $E_{k,m}T^*_Z$, ampleness passes to quotients, and for $k=3$ and $5\mid m$ this quotient has only two rows. Everything else in this note is an assembly of known facts around that idea: the extension to all dimensions (Popov), the order-four case (Merker's list, and the observation that the extremal generators never involve all the derivatives), the discussion of the weight $m$, and the vanishing lemmas. The author would be grateful for any reference in which the quotient observation, or a stronger one, already appears.

\section{The extremal quotient of the invariant third-jet bundle}\label{sec:quotient}

The bundle $\E{3}{m}$ carries a natural filtration by the degree in the third derivative $f'''$. In this section we look at this filtration from the top: for $m=5q$ we identify the piece of highest degree, which is a \emph{quotient} of $\E{3}{m}$, with an explicit two-row Schur power of $\Om$, and we show that these quotients are compatible with multiplication. This is the heart of the argument; everything else is a matter of feeding it into standard vanishing and ampleness theorems.

Throughout this section $X$ is a smooth complex threefold. In local coordinates we write
\[
v=f'(0),\qquad a=f''(0),\qquad b=f'''(0)\qquad(v,a,b\in\CC^3),
\]
and we write $v_i,a_i,b_i$ for their components.

\subsection{Rousseau's generators}
\begin{theorem}[{Rousseau, \cite[Th\'eor\`eme~1.1]{Rou06}}]\label{thm:rousseau}
The algebra $A_3=\bigoplus_m E_{3,m}$ of invariant $3$-jet differentials in dimension $3$ is generated by the following polynomials, for $1\le i<j\le3$ and $1\le\ell\le3$:
\begin{equation}\label{eq:gens}
\begin{gathered}
v_i,\qquad W_{ij}=v_ia_j-v_ja_i,\qquad D=\det(v,a,b),\\
A_{ij;\ell}=v_\ell\,(v_ib_j-v_jb_i)-3\,a_\ell\,W_{ij}.
\end{gathered}
\end{equation}
\end{theorem}

In \cite{Rou06} these are denoted $f_i'$, $w_{ij}$, $w_{ij}^k$ and $W$ respectively. Their weighted degrees are $1,3,5,6$, and their degrees in the variable $b$ are $0,0,1,1$. The theorem asserts that these elements generate the algebra; it does not assert that they are algebraically independent, and we shall never use such a statement. (For the reader's convenience: invariance of $A_{ij;\ell}$ under the reparametrisation action $v\mapsto\varphi'v$, $a\mapsto\varphi'^2a+\varphi''v$, $b\mapsto\varphi'^3b+3\varphi'\varphi''a+\varphi'''v$ is a direct computation in which the two terms in $\varphi'^3\varphi''$ cancel precisely because of the coefficient $3$.)

\subsection{The filtration by the degree in $f'''$}
Let $y=\phi(x)$ be a change of coordinates on $X$ and let $J$, $H$, $C$ denote the first, second and third derivatives of $\phi$ at the base point (a linear, a symmetric bilinear and a symmetric trilinear map). The chain rule gives, for the jet variables of $\phi\circ f$,
\begin{equation}\label{eq:chainrule}
\tilde v=Jv,\qquad \tilde a=Ja+H(v,v),\qquad \tilde b=Jb+3H(v,a)+C(v,v,v).
\end{equation}
The transition functions of $\E{3}{m}$ are induced by the substitution \eqref{eq:chainrule}. Since \eqref{eq:chainrule} is affine-linear in $b$, the substitution does not increase the degree in $b$. Hence, for every $j\ge0$,
\[
F_j\E{3}{m}:=\{P\in\E{3}{m}:\ \deg_bP\le j\}
\]
is a well-defined subbundle of $\E{3}{m}$ (its fibres are the same subspace in every trivialisation, so the rank is constant), and $F_0\subset F_1\subset\cdots$ is an increasing filtration by subbundles. This is the filtration $F^\bullet_{2}$ of \cite[\S2.1]{Div08} written in increasing form: $\deg_b\le j$ if and only if the partial weighted degree of order $2$ is at least $m-3j$.

\begin{proposition}\label{prop:quotient}
Let $q\ge1$. Then $F_{q}\E{3}{5q}=\E{3}{5q}$, and the quotient
\[
Q_q:=\E{3}{5q}\big/F_{q-1}\E{3}{5q}
\]
is a nonzero vector bundle which embeds as a subbundle of $\Sym^{2q}\Om\otimes\Sym^q\Om$. Moreover, for every $r\ge1$ multiplication of jet differentials induces a surjection of vector bundles
\begin{equation}\label{eq:mult}
\Sym^r Q_q\twoheadrightarrow Q_{qr}.
\end{equation}
\end{proposition}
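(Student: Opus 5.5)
Everything will be read off from Rousseau's generators (Theorem~\ref{thm:rousseau}). A monomial in the generators has weight $m=e_1+3e_3+5e_5+6e_6$ and $b$-degree $e_5+e_6$, where $e_1,e_3,e_5,e_6$ count factors of type $v_i$, $W_{ij}$, $A_{ij;\ell}$, $D$. Since $m\ge 5(e_5+e_6)+e_6$, we get $\deg_b\le \lfloor m/5\rfloor$, and this gives $F_q\E{3}{5q}=\E{3}{5q}$. Moreover, $\deg_b=q$ with $m=5q$ forces $e_6=0$, $e_1=e_3=0$ and $e_5=q$. So modulo $F_{q-1}$ the bundle is spanned fibrewise by products of $q$ generators $A_{ij;\ell}$.

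Next I would identify $Q_q$ concretely by taking the top $b$-homogeneous part. The map $P\mapsto$ (coefficient part of $P$ of degree exactly $q$ in $b$) is a linear map on fibres with kernel $F_{q-1}$. Modulo lower $b$-degree, $A_{ij;\ell}\equiv v_\ell(v_ib_j-v_jb_i)$, which is the $b$-linear part. Hence the top part of a product of $q$ such generators is a polynomial of degree $2q$ in $v$ and degree $q$ in $b$, independent of $a$.

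By \eqref{eq:chainrule}, the top-$b$-degree part transforms by $\tilde v=Jv$, $\tilde b\equiv Jb$ modulo terms of lower $b$-degree. Terms such as $3H(v,a)$ do lower $\deg_b$, so they disappear in the top piece. Thus the symbol map realises $Q_q$ as a subsheaf of $\Sym^{2q}\Om\otimes\Sym^q\Om$, with $v$ in the first factor and $b$ in the second, and it is equivariant for the linear $\GL_3$ action given by $J$.

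To see that the image is a subbundle, I would note that the image at every point is the same $\GL_3$-invariant subspace: the span of products of the polynomials $v_\ell(v_ib_j-v_jb_i)$. This subspace is the image under the structure-group representation of a fixed vector space, so it has constant rank and is a subbundle. It is nonzero because, for example, $(v_1(v_1b_2-v_2b_1))^q\neq0$.

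For \eqref{eq:mult}, multiplication $\Sym^r\E{3}{5q}\to\E{3}{5qr}$ sends $\Sym^r F$-filtered pieces compatibly, since $b$-degrees add. It therefore descends to a map $\Sym^rQ_q\to Q_{qr}$, which in the symbol picture is just multiplication of top parts. Surjectivity follows from the spanning statement above. The fibre of $Q_{qr}$ is spanned by classes of products of $qr$ generators $A_{\bullet}$, and any such product can be grouped into $r$ blocks of $q$ factors, so it is the image of a product of $r$ elements of $Q_q$.

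The main point to check carefully is the first step. Rousseau's theorem gives generation over the base point's fibre algebra, so every element of $\E{3}{5q}$ is a polynomial in the generators. I must make sure that the $b$-degree of a polynomial expression is governed as claimed. A priori cancellations could occur, but this only helps: the top part of any element equals the top part of its sum of degree-$q$ monomials, which lie in the span of products of $A$'s. Equally, elements of $b$-degree $q$ are detected exactly by the symbol. So the identification of $Q_q$ with the span of the top parts is exact, and the expected obstacle reduces to this bookkeeping with the weight inequality.
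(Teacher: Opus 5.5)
Your proposal is correct and follows essentially the same route as the paper: the weight count $m=5(e_5+e_6)+e_1+3e_3+e_6$ on Rousseau's monomials, the top-$b$-degree symbol map with kernel $F_{q-1}$ landing in $\Sym^{2q}\Om\otimes\Sym^q\Om$ via $\tilde v=Jv$, $\tilde b\equiv Jb$, the $\GL_3$-stability of the span of products of symbols for the subbundle property, and regrouping $qr$ symbols into $r$ blocks for surjectivity. One point should be stated explicitly, as the paper does: the linear transformation law of the symbol also relies on the top part containing no $a$ (otherwise $\tilde a=Ja+H(v,v)$ would contribute), which you established just before but did not invoke at that step.
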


\begin{proof}
\emph{Step 1: the top $b$-degree part is a polynomial in the symbols.}
Consider a monomial in the generators \eqref{eq:gens} containing $u$ factors $v_i$, $w$ factors $W_{ij}$, $h$ factors $A_{ij;\ell}$ and $d$ factors $D$. Its weighted degree is
\[
u+3w+5h+6d=5(h+d)+u+3w+d,
\]
and its degree in $b$ is at most $h+d$. If the weighted degree is $5q$, then $5(h+d)\le5q$, i.e.\ $h+d\le q$, with equality if and only if $u=w=d=0$, hence $h=q$. Consequently every invariant of weight $5q$ has $\deg_b\le q$ (so $F_q=E$), and its homogeneous part of degree exactly $q$ in $b$ is a linear combination of the top $b$-degree parts of monomials $A_{i_1j_1;\ell_1}\cdots A_{i_qj_q;\ell_q}$, that is, of products of $q$ \emph{symbols}
\begin{equation}\label{eq:symbols}
S_{ij;\ell}(v,b):=v_\ell\,(v_ib_j-v_jb_i).
\end{equation}
Each such product is independent of $a$ and has bidegree $(2q,q)$ in $(v,b)$. To extract the top $b$-degree part of $P$ in a coordinate-free way, introduce an auxiliary scalar variable $z$ and expand $P(v,a,zb)=\sum_{j=0}^{q}z^jP_j(v,a,b)$, where $P_j$ is the homogeneous part of $P$ of degree $j$ in $b$; following the usual convention, $[z^j]F$ denotes the coefficient of $z^j$ in a polynomial $F$ in $z$. The map
\[
\sigma_q(P):=[z^q]\,P(v,a,zb)=P_q(v,a,b)
\]
sends $\E{3}{5q}$ into the polynomials of bidegree $(2q,q)$ in $(v,b)$, and its kernel is exactly $F_{q-1}$.

\emph{Step 2: $\sigma_q$ is a morphism of bundles into $\Sym^{2q}\Om\otimes\Sym^q\Om$.}
Under \eqref{eq:chainrule}, a term of $P$ of degree $q$ in $b$ contributes to the degree-$q$ part in $\tilde b$ only through the substitution $b\mapsto Jb$, $v\mapsto Jv$, because the summands $3H(v,a)+C(v,v,v)$ of $\tilde b$ lower the degree in $b$, and because the top part contains no $a$. Hence $\sigma_q$ intertwines the transition functions of $\E{3}{5q}$ with those of the tensor bundle $\Sym^{2q}\Om\otimes\Sym^q\Om$ (polynomial functions of bidegree $(2q,q)$ on $T_X\oplus T_X$). Its image $Q_q$ is, in every trivialisation, the linear span of the products of $q$ symbols \eqref{eq:symbols}. This span is a $\GL_3(\CC)$-stable subspace of $\Sym^{2q}(\CC^3)^*\otimes\Sym^q(\CC^3)^*$, so its dimension is constant and the images glue to a subbundle. It is nonzero because
\[
\sigma_q\big(A_{12;1}^{\,q}\big)=\big(v_1(v_1b_2-v_2b_1)\big)^q\neq0 ,
\]
and $A_{12;1}^{\,q}$ is an invariant of weight $5q$ by Theorem~\ref{thm:rousseau}. Since $\ker\sigma_q=F_{q-1}$, we get $Q_q\cong\E{3}{5q}/F_{q-1}$.

\emph{Step 3: the surjection \eqref{eq:mult}.}
Multiplication of polynomials is compatible with \eqref{eq:chainrule}, and $\sigma_{qr}(P_1\cdots P_r)=\sigma_q(P_1)\cdots\sigma_q(P_r)$ for $P_i\in\E{3}{5q}$. Therefore multiplication induces a bundle map $\Sym^rQ_q\to Q_{qr}$. It is surjective because $Q_{qr}$ is spanned locally by products of $qr$ symbols, and any such product is a product of $r$ products of $q$ symbols.
\end{proof}

\begin{remark}[Identification of $Q_q$]\label{rem:schur}
On a three-dimensional vector space $V$, the products of the symbols \eqref{eq:symbols} lie in the kernel of the polarisation operator
\[
\delta=\sum_{i=1}^3 v_i\,\frac{\partial}{\partial b_i}\colon\ \Sym^{2q}V^*\otimes\Sym^qV^*\longrightarrow\Sym^{2q+1}V^*\otimes\Sym^{q-1}V^*,
\]
because $\delta S_{ij;\ell}=v_\ell(v_iv_j-v_jv_i)=0$ and $\delta$ is a derivation. The kernel of $\delta$ is the irreducible summand $\Sch{(2q,q,0)}V^*$ of $\Sym^{2q}V^*\otimes\Sym^qV^*$ (it is the summand $j=q$ in the Pieri decomposition $\bigoplus_{j\le q}\Sch{(3q-j,j,0)}V^*$: $\delta$ is $\GL(V)$-equivariant and is nonzero on the highest weight vector $v_1^{2q-j}b_1^{q-j}(v_1b_2-v_2b_1)^j$ of every other summand); the span of the products of symbols is a nonzero $\GL(V)$-stable subspace of it, hence equals it. Thus
\[
Q_q\cong\Sch{(2q,q,0)}\Om,\qquad 0\longrightarrow F_{q-1}\E{3}{5q}\longrightarrow\E{3}{5q}\longrightarrow\Sch{(2q,q,0)}\Om\longrightarrow0 .
\]
The proof of Theorem~\ref{thm:main} does not use this identification.
\end{remark}

\begin{remark}[Consistency with the known graded structure]\label{rem:rousseau12}
Rousseau's Th\'eor\`eme~1.2 in \cite{Rou06}, restated in \cite[\S1]{Mer10} in the form
\[
\Gr^\bullet\E{3}{m}\ \cong\bigoplus_{a+3b+5c+6d=m}\Sch{(a+b+2c+d,\ b+c+d,\ d)}\Om
\]
(the four indices counting factors $f'$, $W$, $A$, $D$ respectively), shows that for $m=5q$ the summand with $c=q$, $a=b=d=0$ is $\Sch{(2q,q,0)}\Om$, that it is the only summand with $c=q$, and that every summand with $d\ge1$ has three rows. Rousseau's proof, moreover, identifies the highest weight vector of this summand with $(w^1_{12})^q=A_{12;1}^{\,q}$, which is the only highest weight vector of degree $q$ in $f'''$; so the fact that the summand sits at the \emph{top} of the $b$-filtration is implicit in his proof. What Proposition~\ref{prop:quotient} makes explicit is that this position makes $\Sch{(2q,q,0)}\Om$ a \emph{quotient} of $\E{3}{5q}$, and not merely a subquotient; this is what makes ampleness, which passes to quotients but not to subquotients, sensitive to it.
\end{remark}

\section{Vanishing}\label{sec:vanishing}

We now recall the vanishing theorem which will be played against the quotients of the previous section. It is a theorem of Br\"uckmann and Rackwitz on tensor forms on complete intersections, which we state in the language of Schur functors, and we deduce from it the vanishing of the global sections of the bundles we need.

For a partition $\lambda$ we write $\lambda'$ for its conjugate partition, so that $\lambda'_j$ is the length of the $j$-th column of the Young diagram of $\lambda$, and $\Sch{\lambda}$ for the corresponding Schur functor. We use the following vanishing theorem.

\begin{theorem}[{Br\"uckmann--Rackwitz, \cite[Introduction and Thm.~4(iii), Cor.~(11)]{BR90}; see also \cite[Thm.~6]{Div08} and \cite[\S1]{Ete21}}]\label{thm:BR}
Let $Z\subset\PP^N$ be a smooth complete intersection of $c$ hypersurfaces of degrees $\ge2$, and let $\lambda$ be a nonempty partition. If
\[
\sum_{j=1}^{c}\lambda'_j<N-c=\dim Z,
\]
then $H^0(Z,\Sch{\lambda}\OmZ)=0$.
\end{theorem}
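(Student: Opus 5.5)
The plan is to compute $H^0(Z,\Sch{\lambda}\OmZ)$ by resolving $\Sch{\lambda}\OmZ$ in terms of bundles coming from $\PP^N$, and then to descend to $\PP^N$ itself, where Bott-type vanishing for negatively twisted Schur powers of $\Omega^1_{\PP^N}$ finishes the argument. The input is the conormal sequence
\[
0\longrightarrow N^*_Z=\bigoplus_{i=1}^c\cO_Z(-d_i)\longrightarrow \Omega^1_{\PP^N}|_Z\longrightarrow \OmZ\longrightarrow0 .
\]

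\emph{Step 1: resolve $\Sch{\lambda}\OmZ$ on $Z$.} For a short exact sequence $0\to A\to B\to C\to0$ there is a functorial resolution of the Schur functor $\Sch{\lambda}C$ (Lascoux, Akin--Buchsbaum--Weyman). Its term in homological degree $s$ is filtered by pieces of the form $\Sch{\lambda/\nu}B\otimes\Sch{\nu'}A$, with $\nu\subset\lambda$ and $|\nu|=s$. We take $A=N^*_Z$, which has rank $c$. Then $\Sch{\nu'}A\neq0$ forces $\nu'$ to have at most $c$ rows, i.e.\ $\nu$ has at most $c$ columns. Together with $\nu\subset\lambda$ this gives
\[
s=|\nu|\le\sum_{j\le c}\lambda'_j .
\]
For $\nu\neq\emptyset$ the factor $\Sch{\nu'}N^*_Z$ is a sum of line bundles $\cO_Z(-e)$ with $e>0$. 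Chasing the resolution, it will suffice to prove
\[
H^{s}\bigl(Z,\Sch{\mu}\Omega^1_{\PP^N}|_Z\otimes\cO_Z(-e)\bigr)=0
\]
for all $s\le\sum_{j\le c}\lambda'_j$, where $e>0$ whenever $s>0$. The $s=0$ term requires $H^0(Z,\Sch{\lambda}\Omega^1_{\PP^N}|_Z)=0$ for $\lambda$ nonempty.

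\emph{Step 2: descend to $\PP^N$.} Tensor the Koszul resolution of $\cO_Z$ by the locally free sheaf $\Sch{\mu}\Omega^1_{\PP^N}(-e)$. Its terms are $\Sch{\mu}\Omega^1_{\PP^N}(-e-e_J)$, with $e_J$ the sum of the degrees $d_i$ over subsets $J$ of size $i\le c$. A further spectral-sequence chase then reduces the vanishing of $H^s$ on $Z$ to
\[
H^{s+i}\bigl(\PP^N,\Sch{\mu}\Omega^1_{\PP^N}(-t)\bigr)=0,\qquad t>0,\quad i\le c,
\]
together with the $t=0$, $s=i=0$ case. The degree bound is exactly the hypothesis: $s+i\le\sum_{j\le c}\lambda'_j+c<N$.

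\emph{Step 3: Bott vanishing, which I expect to be the main obstacle.} Two facts are needed. First, for $t>0$ and $\mu$ nonempty (and also for $\mu$ empty), $H^p(\PP^N,\Sch{\mu}\Omega^1_{\PP^N}(-t))=0$ for all $p<N$. Second, $H^0(\PP^N,\Sch{\mu}\Omega^1_{\PP^N})=0$ for $\mu\neq\emptyset$.

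My first approach is Borel--Weil--Bott on $\PP^N=\GL_{N+1}/P$. Here $\Sch{\mu}\Omega^1(-t)$ is the homogeneous bundle attached to a weight of the form $(t',\,-\mu\text{ reversed})$ in suitable coordinates, and one checks that after adding $\rho$ either some entries repeat, or the weight is dominant only after the maximal number $N$ of transpositions. A fallback is induction from $\Omega^p(-t)$, using the Euler sequence and the fact that $\Sch{\mu}\Omega^1$ is a direct summand of $\Omega^{\mu'_1}\otimes\cdots\otimes\Omega^{\mu'_r}$ twisted appropriately.

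Care is needed in two places: tracking which twists are strictly negative, and making sure the $s=0$, $i>0$ terms only ever meet strictly negative twists. Everything else is bookkeeping in the two spectral sequences. The degree-$\ge2$ hypothesis enters only in ensuring the $\cO(-d_i)$ twists do not collapse to the trivial case.
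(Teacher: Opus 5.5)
The paper gives no proof of this statement. It quotes it from \cite{BR90}, with pointers to \cite{Div08} and \cite{Ete21}, so there is no internal argument to compare yours with, and your proposal has to be judged as a stand-alone proof. As such it is correct.

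\emph{Step 1.} The Schur complex of the locally split injection $N^*_Z\to\Omega^1_{\PP^N}|_Z$ does resolve $\Sch{\lambda}\OmZ$ in characteristic $0$. To see this, apply the $\lambda$-isotypic projection, which is exact, to the $|\lambda|$-th tensor power (with Koszul signs) of this two-term complex; that tensor power is quasi-isomorphic to $(\OmZ)^{\otimes|\lambda|}$. The resolution has length at most $S=\sum_{j\le c}\lambda'_j$. After the Koszul step you need vanishing of $H^{s+i}$ with $s+i\le S+c<N$. All twists there are strictly negative, except in the single term $H^0(\PP^N,\Sch{\lambda}\Omega^1_{\PP^N})$.

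\emph{Step 3.} The step you flagged as the main obstacle goes through. Write $\Omega^1_{\PP^N}=Q^*\otimes\cO(-1)$, with $Q$ the rank-$N$ tautological quotient, and set $k=\ell(\mu)$. The weight plus $\rho$ attached to $\Sch{\mu}\Omega^1_{\PP^N}(-t)$ is
\[
(N,\dots,k+1,\,k-\mu_k,\dots,1-\mu_1,\,|\mu|+t).
\]
The first $N$ entries strictly decrease, and the middle ones are at most $k-1$. For $t\ge1$ one has $|\mu|+t\ge k+1$. So either $|\mu|+t$ repeats one of $k+1,\dots,N$, or it must pass all $N$ entries; in both cases only $H^N$ can be nonzero. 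The $t=0$ case is simply
\[
\Sch{\mu}\Omega^1_{\PP^N}\subset(\Omega^1_{\PP^N})^{\otimes|\mu|}\subset\cO(-|\mu|)^{\oplus(N+1)^{|\mu|}}.
\]

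\emph{Your fallback.} It also works, and needs no twist. $\Sch{\mu}\Omega^1_{\PP^N}$ is a direct summand of $\Omega^{\mu'_1}_{\PP^N}\otimes\cdots\otimes\Omega^{\mu'_r}_{\PP^N}$. The sequences
\[
0\to\Omega^a_{\PP^N}\to\cO(-a)^{\oplus\binom{N+1}{a}}\to\Omega^{a-1}_{\PP^N}\to0
\]
give $H^p=0$ for $p<N$ on any negatively twisted such product, by induction on $a$ and on the number of factors.

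\emph{What your route buys.} Compared with citing \cite{BR90}, you get a proof that is self-contained modulo Bott's theorem on $\PP^N$. It also makes the numerology transparent: $\sum_{j\le c}\lambda'_j$ is the length of the Schur complex of the rank-$c$ conormal bundle, $c$ is the length of the Koszul complex, and their sum must stay below $N$ because negatively twisted Schur powers of $\Omega^1_{\PP^N}$ only have top cohomology.

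\emph{One correction.} The hypothesis $d_i\ge2$ plays no role in your argument, so your closing sentence misplaces it. Your twists are $\cO(-e)$ with $e\ge|\nu|$ and $e_J\ge i$, and these are strictly negative as soon as all $d_i\ge1$. Hence your argument proves the vanishing for all positive degrees. This is consistent: a linear equation can be absorbed by replacing $(N,c)$ with $(N-1,c-1)$, and the hypothesis for $(N,c)$ implies the one for $(N-1,c-1)$. In particular your argument covers the case $d=1$, which the paper treats separately in the proof of Lemma~\ref{lem:vanishing}.
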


In words: $Z$ carries no nonzero tensor forms of symmetry type $\lambda$ if the Young diagram of $\lambda$ has fewer than $\dim Z$ cells in its first $\operatorname{codim}Z$ columns (in \cite{BR90} the statement is phrased in terms of a Young tableau $T$ and of the sheaf $\mathcal F^T$ of $T$-symmetrical tensor forms, which is $\Sch{\lambda}\OmZ$ for the diagram $\lambda$ of $T$). This is the vanishing theorem underlying Diverio's result that smooth complete intersections carry no invariant jet differentials of order $k<\dim Z/\operatorname{codim}Z$ \cite[Thm.~1 and Thm.~7]{Div08}, and Etesse's observation \cite[\S1]{Ete21} that $\Sch{\lambda}\OmZ$ cannot be ample when $\lambda$ has $\ell$ rows and $(1+\ell)c<N$.

\begin{lemma}\label{lem:vanishing}
Let $Z\subset\PP^N$ be a smooth complete intersection of $c$ hypersurfaces of degrees $\ge2$ with $3c<N$, or let $Z=X\subset\PP^4$ be any smooth hypersurface. Then for every integer $t\ge1$
\[
H^0\big(Z,\Sym^{2t}\OmZ\otimes\Sym^t\OmZ\big)=0 .
\]
\end{lemma}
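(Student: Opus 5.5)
Decompose the tensor product by the Pieri rule and apply Theorem~\ref{thm:BR} to each summand. Over the characteristic-zero field $\CC$ the Schur functors commute with vector bundle constructions, so there is a canonical splitting
\[
\Sym^{2t}\OmZ\otimes\Sym^t\OmZ\;\cong\;\bigoplus_{j=0}^{t}\Sch{(3t-j,\,j)}\OmZ .
\]
It comes from the $\GL$-decomposition of $\Sym^{2t}V\otimes\Sym^tV$ with $V=\OmZ$ at each point, and each partition $(3t-j,j)$ is dominant since $j\le t\le 2t$. When $\operatorname{rank}\OmZ=\dim Z\ge2$ every summand is nonzero; this holds in both cases of the lemma. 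Global sections of a direct sum are the direct sum of the global sections, so it suffices to show $H^0(Z,\Sch{(3t-j,j)}\OmZ)=0$ for each $0\le j\le t$.

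Next, check the numerical hypothesis of Theorem~\ref{thm:BR}. The partition $\lambda=(3t-j,j)$ has at most two rows, so every column of its diagram has length at most $2$, i.e.\ $\lambda'_i\le2$ for all $i$. Hence
\[
\sum_{i=1}^{c}\lambda'_i\le 2c .
\]
In the first case, $3c<N$ gives $2c<N-c=\dim Z$, so the theorem applies.

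In the hypersurface case $c=1$ and $N=4$, so the sum is the single term $\lambda'_1\le2$, which is less than $\dim X=3$. One must also cover degree $d=1$, which Theorem~\ref{thm:BR} excludes. Then $X=\PP^3$, and the bundle is a subbundle of $(\Om)^{\otimes 3t}$. The group $H^0(\PP^3,(\Om)^{\otimes p})$ vanishes for $p\ge1$, because $\Om$ restricted to a line is $\cO(-2)\oplus\cO(-1)^{\oplus2}$, so every tensor power with $p\ge1$ is a sum of negative line bundles on every line through every point. The same restriction argument also handles the hyperplane case if $N>4$ hyperplanes appear among the $c$ hypersurfaces, but Theorem~\ref{thm:ci} assumes degrees $\ge2$. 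So the only extra case needed is $\PP^3$ inside the statement "any smooth hypersurface", and $d\ge2$ is covered by the theorem.

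The main obstacle is bookkeeping rather than substance. One has to make sure the Pieri summands are really two-row partitions, so that the column bound $\lambda'_i\le2$ is legitimate, and to handle the degree-one hypersurface separately, since it lies outside the hypotheses of Theorem~\ref{thm:BR}.
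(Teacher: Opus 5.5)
Your proposal is correct and follows the paper's proof. You use the same Pieri decomposition into the two-row summands $\Sch{(3t-j,j)}\OmZ$ and the same Br\"uckmann--Rackwitz column count $\sum_{i\le c}\lambda'_i\le 2c<N-c$, and you treat the degree-one case $X\cong\PP^3$ separately, as the paper does; the only difference is there: the paper embeds the bundle in a sum of copies of $\cO_{\PP^3}(-3t)$ via the dual Euler sequence, whereas you restrict to lines, using $\Omega^1_{\PP^3}|_L\cong\cO_L(-2)\oplus\cO_L(-1)^{\oplus2}$, and both arguments work.
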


\begin{proof}
By Pieri's rule,
\[
\Sym^{2t}\OmZ\otimes\Sym^t\OmZ\ \cong\ \bigoplus_{j=0}^{t}\Sch{(3t-j,\,j,\,0,\dots,0)}\OmZ .
\]
Every summand corresponds to a nonempty partition with at most two rows, so $\sum_{j\le c}\lambda'_j\le2c<N-c$, and Theorem~\ref{thm:BR} applies. For $X\subset\PP^4$ of degree $d\ge2$ this is the case $c=1$, $N=4$. If $d=1$, then $X\cong\PP^3$ and the dual Euler sequence $0\to\Omega^1_{\PP^3}\to\cO_{\PP^3}(-1)^{\oplus4}\to\cO_{\PP^3}\to0$ exhibits $\Sym^{2t}\Om\otimes\Sym^t\Om$ as a subsheaf of a direct sum of copies of $\cO_{\PP^3}(-3t)$, which has no nonzero sections.
\end{proof}

\section{Non-ampleness on hypersurfaces of $\PP^4$}\label{sec:nonample}

With the quotient bundles of \S\ref{sec:quotient} and the vanishing of \S\ref{sec:vanishing} at hand, the proof of the main theorem is a few lines: ampleness descends to quotients, ampleness gives sections of symmetric powers, and there are no sections to be had.

\begin{proof}[Proof of Theorem~\ref{thm:main}]
Suppose that $\E{3}{5q}$ is ample. By Proposition~\ref{prop:quotient} it has the nonzero quotient bundle $Q_q$, which is then ample by (H1). By (H2), $\Sym^rQ_q$ is generated by global sections for all $r\gg0$; fix such an $r$. By the surjection \eqref{eq:mult} and (H3), the nonzero bundle $Q_{qr}$ is generated by global sections, hence $H^0(X,Q_{qr})\neq0$. Since $Q_{qr}\subset\Sym^{2qr}\Om\otimes\Sym^{qr}\Om$, this contradicts Lemma~\ref{lem:vanishing} with $t=qr$.
\end{proof}

\begin{remark}
Once $Q_q\cong\Sch{(2q,q,0)}\Om$ is known (Remark~\ref{rem:schur}), Theorem~\ref{thm:main} also follows from (H1) and from Etesse's observation \cite[\S1]{Ete21} that a Schur power with two rows cannot be ample on a hypersurface of $\PP^4$, since $(1+2)\cdot1<4$. Etesse deduces the global generation of $\Sch{m\lambda}\OmZ$ from the ampleness of $\Sch{\lambda}\OmZ$ via the theorem of Laytimi--Nahm \cite[Prop.~2.5 and Rem.~2.6]{Ete21}; the surjection \eqref{eq:mult} is the elementary substitute for this step in our situation.
\end{remark}

\section{Arbitrary dimension and codimension}\label{sec:general}

Nothing in \S\S\ref{sec:quotient}--\ref{sec:nonample} is specific to threefolds except the use of Rousseau's theorem, which is stated in dimension $3$. We now observe that Rousseau's own proof gives the generators in every dimension, and we draw the consequences for complete intersections of arbitrary dimension and codimension.

\subsection{Generators of the third-jet algebra in every dimension}
Rousseau proves Theorem~\ref{thm:rousseau} as follows \cite[\S3]{Rou06}. Let $V=\CC^3$ be the space of $3$-jets of one scalar function, on which the group $G'_3$ of reparametrisations tangent to the identity acts linearly and unimodularly; a $3$-jet of a curve in $\CC^n$ is a system of $n$ vectors $u_1,\dots,u_n\in V$, $u_i=(f_i',f_i'',f_i''')$, and $E_{3,m}$ in dimension $n$ is the weight-$m$ part of the algebra of $G'_3$-invariant polynomials in $u_1,\dots,u_n$. Rousseau shows that the three multilinear forms
\[
F_1(u_1)=f_1',\qquad F_2(u_1,u_2)=W_{12},\qquad F_3(u_1,u_2,u_3)=A_{12;3}
\]
form a \emph{complete system} of $G'_3$-invariants of a system of two vectors, and then invokes Popov's theorem \cite[Th\'eor\`eme~2.6(2)]{Rou06}: since $G'_3\subset\SL(V)$ and $\dim V=3$, a complete system for $\dim V-1=2$ vectors, together with the determinant, is a complete system for \emph{any} number of vectors. Rousseau states the conclusion for $n=3$, but the theorem he invokes gives it for every $n$. We record this explicitly.

\begin{proposition}[{Rousseau--Popov}]\label{prop:alln}
For every $n\ge2$, the algebra of invariant $3$-jet differentials in dimension $n$ is generated by the polynomials
\[
v_i\ (1\le i\le n),\qquad W_{ij}\ (i<j),\qquad A_{ij;\ell}\ (i<j,\ 1\le\ell\le n),
\]
\[
D_{ijk}=\det(v_{\{ijk\}},a_{\{ijk\}},b_{\{ijk\}})\ (i<j<k),
\]
the last family being the $3\times3$ minors of the $3\times n$ matrix with rows $v,a,b$. Their weights are $1,3,5,6$ and their degrees in $b$ are $0,0,1,1$.
\end{proposition}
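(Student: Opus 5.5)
The proof will follow Rousseau's argument in \cite[\S3]{Rou06}, except that Popov's theorem is invoked for an arbitrary number $n$ of vectors instead of $n=3$. Fix $n\ge2$. A $3$-jet of a curve in $\CC^n$ is identified with an $n$-tuple $(u_1,\dots,u_n)$ of vectors in $V=\CC^3$, where $u_i=(v_i,a_i,b_i)$. Under this identification the reparametrisation group acts diagonally on $V^{\oplus n}$, through the matrices
\[
\begin{pmatrix}\varphi'&0&0\\ \varphi''&\varphi'^2&0\\ \varphi'''&3\varphi'\varphi''&\varphi'^3\end{pmatrix}.
\]
The first step is the standard reduction, as in \cite[\S6]{Dem97} and \cite{Rou06}. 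Invariant jet differentials of weighted degree $m$ are exactly the $G'_3$-invariants on which the $\CC^*$-part $\varphi(t)=\lambda t$ acts by $\lambda^m$. Here $G'_3$ is the unipotent subgroup $\varphi'(0)=1$, and it lies in $\SL(V)$ because it is lower triangular with diagonal $(1,1,1)$. So $A_3^{(n)}=\bigoplus_mE_{3,m}$ is the algebra $\CC[V^{\oplus n}]^{G'_3}$, graded by weight. This holds because the weight grading is the $\CC^*$-weight and $\CC^*$ normalises $G'_3$.

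The second step imports Rousseau's computation: the multilinear forms $F_1(u_1)=v_1$, $F_2(u_1,u_2)=W_{12}$ and $F_3(u_1,u_2,u_3)=A_{12;3}$ form a complete system of invariants for two vectors. Here ``complete system'' is meant in Popov's sense, namely that polarisations and substitutions of these forms generate $\CC[V^{\oplus 2}]^{G'_3}$. This is done in \cite[\S3]{Rou06} and does not depend on $n$, so I take it as given. The third step applies Popov's theorem \cite[Th\'eor\`eme~2.6(2)]{Rou06}. For $G\subset\SL(V)$ with $\dim V=3$, a complete system for $\dim V-1=2$ vectors, together with $\det$, is a complete system for any number $n$ of vectors. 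The resulting generators of $\CC[V^{\oplus n}]^{G'_3}$ are the forms evaluated on all choices of the vectors: $F_1(u_i)=v_i$, $F_2(u_i,u_j)=W_{ij}$, $F_3(u_i,u_j,u_\ell)=A_{ij;\ell}$, and $\det(u_i,u_j,u_k)=D_{ijk}$.

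The fourth step cleans up the list. Antisymmetry gives $W_{ji}=-W_{ij}$ and $A_{ji;\ell}=-A_{ij;\ell}$, and also $W_{ii}=A_{ii;\ell}=0$ and $\det(u_i,u_j,u_k)=\pm D_{ijk}$ or $0$. Hence we may restrict to $i<j$ and $i<j<k$, with $\ell$ arbitrary. The weights and $b$-degrees are read off directly: $v_i$ has weight $1$, $W_{ij}$ weight $3$, $A_{ij;\ell}$ weight $1+1+3=5$ with $b$-degree $1$, and $D_{ijk}$ weight $1+2+3=6$ with $b$-degree $1$. Finally I check directly that each listed polynomial is invariant, using the reparametrisation formulas quoted after Theorem~\ref{thm:rousseau}.

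I expect the main obstacle to be matching the precise hypotheses of Popov's theorem as quoted by Rousseau. One point is whether it requires polarisations of the forms, not just evaluations. Here $F_1,F_2,F_3$ are multilinear and alternating in the relevant slots, so polarising only reproduces evaluations on distinct vectors and adds nothing. The other point is checking that the completeness for two vectors established in \cite{Rou06} is exactly the notion Popov's theorem consumes. I would handle this by re-reading Rousseau's Th\'eor\`eme~2.6 and verifying that his $n=3$ deduction never used $n=3$, since the theorem is stated for arbitrary numbers of vectors. The case $n=2$ needs no determinant and is just the complete system itself.
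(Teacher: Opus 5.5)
Your proposal is correct and follows the paper's proof essentially step for step. It uses Rousseau's complete system $\{F_1,F_2,F_3\}$ of $G'_3$-invariants of two vectors, adds $\det$ via Popov's Th\'eor\`eme~2.6(2) to pass to $n$ vectors, takes the values on all choices of arguments, and observes that full reparametrisation invariants are exactly the weighted-homogeneous $G'_3$-invariants. One small wording fix: polarising these multilinear forms gives their values on arguments chosen among $u_1,\dots,u_n$ \emph{with repetitions allowed}, not only on distinct vectors; for example $A_{12;1}$ arises this way, since $F_3$ is alternating only in its first two slots. Your final list, with $\ell$ arbitrary, already reflects this.
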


\begin{proof}
The polynomials listed are exactly the values of $F_1,F_2,F_3$ and of $\det$ on all choices of arguments among $u_1,\dots,u_n$ (repetitions allowed): they are the polarisations of the complete system $\{F_1,F_2,F_3,\det\}$ in the sense of \cite[D\'ef.~2.5]{Rou06}. By \cite[Th\'eor\`eme~2.6(2)]{Rou06} they generate the algebra of $G'_3$-invariants of $n$ vectors for every $n$. Passing from $G'_3$-invariants to invariants under the full reparametrisation group only amounts to taking weighted-homogeneous components, and all the generators are weighted-homogeneous. (For $n=2$ the minors are absent and one recovers Demailly's description of $E_{3,\bullet}$ in dimension $2$, cf.\ \cite[\S1]{Mer10}.)
\end{proof}

The same reasoning with $k=2$ gives the classical fact that the invariant $2$-jet algebra in every dimension is generated by the $v_i$ and the $W_{ij}$ \cite[\S3, Proposition on the Wronskian subalgebra]{Mer10}, which we use in Remark~\ref{rem:ordertwo}.

\subsection{The extremal quotient in dimension $n$}
\begin{proposition}\label{prop:quotientn}
Let $Z$ be a smooth variety of dimension $n\ge2$ and $q\ge1$. Then $\Ez{3}{5q}$ has a nonzero quotient bundle $Q_q\subset\Sym^{2q}\OmZ\otimes\Sym^q\OmZ$, isomorphic to $\Sch{(2q,q,0,\dots,0)}\OmZ$, and multiplication induces surjections $\Sym^rQ_q\twoheadrightarrow Q_{qr}$ for all $r\ge1$.
\end{proposition}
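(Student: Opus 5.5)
The plan is to rerun the proof of Proposition~\ref{prop:quotient} verbatim, with Theorem~\ref{thm:rousseau} replaced by Proposition~\ref{prop:alln}, and then to identify the image by the argument of Remark~\ref{rem:schur} carried out for $V$ of dimension $n$.

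\emph{Step 1 (degree bound).} Take a monomial in the generators of Proposition~\ref{prop:alln} with $u$ factors $v_i$, $w$ factors $W_{ij}$, $h$ factors $A_{ij;\ell}$ and $d$ factors $D_{ijk}$. Its weight is $5(h+d)+u+3w+d$ and its $b$-degree is at most $h+d$. For weight $5q$ this forces $h+d\le q$, with equality only when $u=w=d=0$. Hence $F_q\Ez{3}{5q}=\Ez{3}{5q}$, and the top $b$-degree part $\sigma_q(P)=[z^q]P(v,a,zb)$ of any invariant is a linear combination of products of $q$ symbols $S_{ij;\ell}=v_\ell(v_ib_j-v_jb_i)$. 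These products are independent of $a$ and have bidegree $(2q,q)$ in $(v,b)$.

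\emph{Step 2 (bundle map).} The chain rule \eqref{eq:chainrule} holds in any dimension and is affine in $b$, and the lower-order corrections to $\tilde b$ drop the $b$-degree. So $\sigma_q$ intertwines the transition functions with those of $\Sym^{2q}\OmZ\otimes\Sym^q\OmZ$, and its kernel is $F_{q-1}$. Its image is, fibrewise, the span of the products of $q$ symbols. This span is $\GL_n$-stable, so its rank is constant and it glues to a subbundle $Q_q$. The quotient $Q_q$ is nonzero because $\sigma_q(A_{12;1}^q)=(v_1(v_1b_2-v_2b_1))^q\ne0$. Multiplicativity of $\sigma$ then gives the surjections $\Sym^rQ_q\twoheadrightarrow Q_{qr}$, exactly as in Step~3 of Proposition~\ref{prop:quotient}.

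\emph{Step 3 (identification).} This is the only new point and the expected obstacle. I would use the derivation $\delta=\sum_{i=1}^n v_i\,\partial/\partial b_i$. It kills every symbol, so the span lies in $\ker\delta$. By Pieri's rule,
\[
\Sym^{2q}V^*\otimes\Sym^qV^*\cong\bigoplus_{j=0}^q\Sch{(3q-j,j,0,\dots,0)}V^*,
\]
with multiplicity one, and this holds for any $n\ge2$ because only two rows occur. On the highest weight vector $v_1^{2q-j}b_1^{q-j}(v_1b_2-v_2b_1)^j$ of the $j$-th summand one computes
\[
\delta\bigl(v_1^{2q-j}b_1^{q-j}(v_1b_2-v_2b_1)^j\bigr)=(q-j)\,v_1^{2q-j+1}b_1^{q-j-1}(v_1b_2-v_2b_1)^j,
\]
since $\delta$ kills $v_1b_2-v_2b_1$. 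This is nonzero for $j<q$. By equivariance and irreducibility, $\delta$ is then injective on every summand with $j<q$. Because the decomposition is multiplicity-free, any $\GL$-submodule of $\ker\delta$ is a sum of isotypic pieces lying in $\ker\delta$, so $\ker\delta=\Sch{(2q,q,0,\dots,0)}V^*$.

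The span of the symbol products is a nonzero $\GL(V)$-stable subspace of this irreducible module, so it is all of it. Therefore $Q_q\cong\Sch{(2q,q,0,\dots,0)}\OmZ$. The care needed is in checking the multiplicity-freeness and the $\delta$ computation uniformly in $n$. Both are standard, and neither depends on $n$ beyond $n\ge2$.
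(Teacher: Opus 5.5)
Your proposal is correct and is essentially the paper's proof. The paper also reruns Proposition~\ref{prop:quotient} with Proposition~\ref{prop:alln} in place of Theorem~\ref{thm:rousseau}, noting that the minors $D_{ijk}$ enter the weight count exactly like $D$, and identifies the image with $\ker\delta=\Sch{(2q,q,0,\dots,0)}V^*$ as in Remark~\ref{rem:schur}. Your explicit computation of $\delta$ on the highest weight vectors, together with the multiplicity-freeness of the two-row Pieri decomposition for every $n\ge2$, spells out the step the paper only asserts.
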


\begin{proof}
Identical to the proofs of Proposition~\ref{prop:quotient} and Remark~\ref{rem:schur}, with Proposition~\ref{prop:alln} in place of Theorem~\ref{thm:rousseau}: the minors $D_{ijk}$ have weight $6$ and $b$-degree $1$ exactly like $D$, so the weight count of Step~1 is unchanged, and the kernel of $\delta=\sum_{i=1}^n v_i\,\partial/\partial b_i$ on $\Sym^{2q}V^*\otimes\Sym^qV^*$ is the irreducible summand $\Sch{(2q,q,0,\dots,0)}V^*$ for every $n\ge2$.
\end{proof}

\begin{proof}[Proof of Theorem~\ref{thm:ci}]
Note that $3c<N$ forces $\dim Z=N-c>2c\ge2$. Suppose $\Ez{3}{5q}$ ample; by Proposition~\ref{prop:quotientn} and (H1)--(H3), $H^0(Z,Q_{qr})\ne0$ for some $r\ge1$, contradicting Lemma~\ref{lem:vanishing}.
\end{proof}

\begin{corollary}\label{cor:conj}
Readings \textup{(A)} and \textup{(A$'$)} of Conjectures~\ref{conj:DT} and~\ref{conj:Deng} fail at $k=3$ for every $(N,c)$ with $3c<N\le4c$. More precisely, for every smooth complete intersection $Z\subset\PP^N$ of codimension $c$ and multidegree $\ge2$, and every integer $M\ge1$, the bundle $\Ez{3}{m}$ fails to be ample for infinitely many multiples $m$ of $M$; and $k=3$ is within the range $k\ge N/c-1$ of the conjecture exactly when $N\le4c$.
\end{corollary}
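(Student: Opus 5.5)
The corollary is essentially a repackaging of Theorem~\ref{thm:ci} together with some bookkeeping on the quantifiers and on the range of the conjecture.

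\emph{Step 1: the threshold.} Since $c\ge1$, the condition $3\ge N/c-1$ is equivalent to $4c\ge N$, i.e.\ $N\le4c$. So $k=3$ lies in the range $k\ge N/c-1$ of Conjectures~\ref{conj:DT} and~\ref{conj:Deng} exactly when $N\le4c$. For $3c<N\le4c$ the hypothesis $3c<N$ of Theorem~\ref{thm:ci} also holds.

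\emph{Step 2: infinitely many multiples of $M$.} Fix $M\ge1$ and a smooth complete intersection $Z$ as in the statement, with $3c<N$. For every $s\ge1$, the weight $m=5Ms$ is a multiple of $M$ and has the form $5q$ with $q=Ms\ge1$. By Theorem~\ref{thm:ci}, $\Ez{3}{5Ms}$ is not ample. These weights are pairwise distinct, so $\Ez{3}{m}$ fails to be ample for infinitely many multiples $m$ of $M$.

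\emph{Step 3: failure of the readings.} Take any $(N,c)$ with $3c<N\le4c$. The general complete intersections of sufficiently high degree in the conjectures are in particular smooth of multidegree $\ge2$, so Step~2 applies to them.
\begin{itemize}
\item[(A)] Reading (A) would require $\Ez{3}{m}$ to be ample for all large $m$. This is excluded by Step~2 with $M=1$.
\item[(A$'$)] Reading (A$'$) would require an $M$ such that $\Ez{3}{m}$ is ample for all sufficiently large multiples $m$ of $M$. This is excluded by Step~2 for that same $M$, since the non-ample weights $5Ms$ are unbounded.
\end{itemize}
By Step~1, $k=3$ is within the conjectured range for these $(N,c)$, so both readings genuinely fail.

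No step is a real obstacle. The only point needing care is matching the quantifiers of (A) and (A$'$): one must observe that the weights $5Ms$ are both multiples of $M$ and unbounded, which is what rules out ``all sufficiently large multiples''.
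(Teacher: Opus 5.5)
Your proposal is correct and is essentially the paper's argument. The paper applies Theorem~\ref{thm:ci} to all multiples of $\operatorname{lcm}(5,M)$, where you use the weights $5Ms$, and it likewise observes that the theorem holds for every smooth complete intersection, so no genericity assumption can restore ampleness. Your explicit check that $3\ge N/c-1\iff N\le4c$ and your matching of the quantifiers in (A) and (A$'$) simply spell out what the paper leaves implicit.
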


\begin{proof}
Theorem~\ref{thm:ci} applies to $m\in5\ZZ_{>0}$, in particular to all multiples of $\operatorname{lcm}(5,M)$. Since it holds for \emph{every} smooth complete intersection, no genericity assumption can restore ampleness.
\end{proof}

\begin{remark}
The smallest cases are $(N,c)=(4,1)$ (Theorem~\ref{thm:main}), $(7,2)$ and $(8,2)$ (complete intersections of two hypersurfaces in $\PP^7$ or $\PP^8$, of dimensions $5$ and $6$), and $(10,3)$, $(11,3)$, $(12,3)$. Hypersurfaces in $\PP^N$ with $N\ge5$ are not covered by order three, because there the conjecture requires $k\ge N-1\ge4$: hypersurfaces in $\PP^5$ are treated in \S\ref{sec:k4} (Theorem~\ref{thm:k4}), while $N\ge6$ would require order $k\ge5$, see Remark~\ref{rem:general}.
\end{remark}

\section{Order four}\label{sec:k4}

The threshold of the conjecture for hypersurfaces in $\PP^N$ is $k\ge N-1$, so the next case after $\PP^4$ concerns jet differentials of order $4$ on fourfolds in $\PP^5$. The invariant $4$-jet algebra is no longer described by a handful of generators, but Merker has computed it in dimension $4$, and it turns out that his list contains exactly the information needed to run the argument of the previous sections once more. The conclusion is the same, and for the same reason: the generators which achieve the extremal degree in $f''''$ never involve all four derivatives.

Throughout this section $Z$ is a smooth variety of dimension $n\ge4$, and in local coordinates we write
\[
v=f'(0),\qquad a=f''(0),\qquad b=f'''(0),\qquad d=f''''(0)\qquad(v,a,b,d\in\CC^n).
\]
For a $3\times3$ minor we write $\det_{ijk}(x,y,z)$ for the determinant of the rows $i,j,k$ of the $n\times3$ matrix $(x\,|\,y\,|\,z)$.

\subsection{Merker's generators}
Merker \cite[\S11]{Mer10} proves that in dimension $n=4$ the algebra of invariant $4$-jet differentials is generated by the polarisations (the $\GL_4$-translates) of sixteen ``bi-invariants'', which he denotes
\begin{gather*}
f_1',\ \Lambda^3,\ \Lambda^5,\ \Lambda^7,\ D^6,\ D^8,\ N^{10},\ W^{10},\\
 M^8,\ E^{10},\ L^{12},\ Q^{14},\ R^{15},\ U^{17},\ V^{19},\ X^{21},
\end{gather*}
the superscript being the weight; the polarisations number $2835$. The seven initial ones are given explicitly \cite[\S11, ``First loop'']{Mer10}: with $\Delta^{p,q}_{12}=f_1^{(p)}f_2^{(q)}-f_2^{(p)}f_1^{(q)}$,
\begin{align*}
\Lambda^3&=W_{12},\qquad \Lambda^5=A_{12;1},\\ D^6&=\det\nolimits_{123}(v,a,b),\qquad W^{10}=\det(v,a,b,d),\\
\Lambda^7&=\Delta^{1,4}_{12}\,v_1v_1+(\text{terms without }d),\\
D^8&=v_1\det\nolimits_{123}(v,a,d)-6\,a_1\det\nolimits_{123}(v,a,b),\\
N^{10}&=v_1v_1\det\nolimits_{123}(v,b,d)-3\,v_1a_1\det\nolimits_{123}(v,a,d)\\&\qquad+(\text{terms without }d),
\end{align*}
and the remaining nine are defined by the reduced syzygies of \cite[\S11]{Mer10}, which are exact polynomial identities: $f_1'f_1'M^8=-5\Lambda^5\Lambda^5+3\Lambda^3\Lambda^7$, $f_1'E^{10}=-6\Lambda^5D^6+3\Lambda^3D^8$, $f_1'L^{12}=-\Lambda^7D^6+5\Lambda^3N^{10}$,
\begin{equation}\label{eq:QRX}
\begin{gathered}
f_1'Q^{14}=\Lambda^7D^8-10\Lambda^5N^{10},\qquad
f_1'R^{15}=D^8D^8-12D^6N^{10},\\
f_1'X^{21}=4D^8Q^{14}-5\Lambda^7R^{15},
\end{gathered}
\end{equation}
$f_1'U^{17}=8D^6L^{12}+5\Lambda^3R^{15}$ and $f_1'V^{19}=24D^6Q^{14}-25\Lambda^5R^{15}$. (We have transcribed the numerical constants from \cite[\S11]{Mer10} up to nonzero scalar factors; only the shape of these identities, not the constants, is used below.) Reading off the degree in $d$ from these formulas gives the following table of upper bounds (which are the exact degrees for the generators we shall use):
\[
\begin{array}{c|cccccccc}
&f_1'&\Lambda^3&\Lambda^5&\Lambda^7&D^6&D^8&N^{10}&W^{10}\\\hline
\text{weight}&1&3&5&7&6&8&10&10\\
\deg_d\le&0&0&0&1&0&1&1&1
\end{array}
\]
\[
\begin{array}{c|cccccccc}
&M^8&E^{10}&L^{12}&Q^{14}&R^{15}&U^{17}&V^{19}&X^{21}\\\hline
\text{weight}&8&10&12&14&15&17&19&21\\
\deg_d\le&1&1&1&2&2&2&2&3
\end{array}
\]
In every case $\deg_d\le\operatorname{weight}/7$, with equality only for $\Lambda^7$, $Q^{14}$, $X^{21}$. It is convenient to record the \emph{defect} $\epsilon(P)=\operatorname{weight}(P)-7\deg_dP\ge0$ of each generator: it is $0$ for $\Lambda^7,Q^{14},X^{21}$; it is $1$ for $f_1',D^8,M^8,R^{15}$; and it is at least $3$ for all the others (for instance $\epsilon(W^{10})=3$). The defect is additive on products.

\begin{proposition}[{Merker--Popov}]\label{prop:alln4}
For every $n\ge4$, the algebra of invariant $4$-jet differentials in dimension $n$ is generated by the polarisations of the sixteen bi-invariants above. Consequently every invariant $P$ of weight $m=7q+r$, $0\le r\le6$, in dimension $n\ge4$ satisfies $\deg_dP\le q$. If moreover $r\in\{0,1,2\}$, the homogeneous part of degree exactly $q$ in $d$ of $P$ is a linear combination of products of $r$ leading parts of defect-one generators, namely
\begin{equation}\label{eq:lead4b}
\begin{gathered}
v_1,\qquad D^8_{\rm top}=v_1\,\det\nolimits_{123}(v,a,d),\\
M^8_{\rm top}=3\,W_{12}\,(v_1d_2-v_2d_1),\qquad R^{15}_{\rm top}=v_1\,\det\nolimits_{123}(v,a,d)^2,
\end{gathered}
\end{equation}
with leading parts of defect-zero generators, namely
\begin{equation}\label{eq:lead4}
\begin{aligned}
\Lambda^7_{\rm top}&=v_1^2\,(v_1d_2-v_2d_1),\\
Q^{14}_{\rm top}&=v_1^2\,(v_1d_2-v_2d_1)\,\det\nolimits_{123}(v,a,d),\\
X^{21}_{\rm top}&=-\,v_1^2\,(v_1d_2-v_2d_1)\,\det\nolimits_{123}(v,a,d)^2
\end{aligned}
\end{equation}
and of their polarisations.
\end{proposition}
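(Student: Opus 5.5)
The proof has two independent parts: a generation statement in every dimension $n\ge4$, and a combinatorial count of the degree in $d$, organised by the defect $\epsilon$.

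\emph{Generation.} I would argue exactly as in Proposition~\ref{prop:alln}. The group $G'_4$ of reparametrisations tangent to the identity acts linearly and unimodularly on the space $V=\CC^4$ of $4$-jets of a scalar function. A $4$-jet of a curve in $\CC^n$ is a system of $n$ vectors $u_1,\dots,u_n\in V$. Merker's theorem in dimension $4$ says that the sixteen bi-invariants, with their polarisations, generate the $G'_4$-invariants of four vectors. A fortiori the polarisations give a complete system for $\dim V-1=3$ vectors: restricting to the locus where the fourth vector vanishes sends generators to generators or to zero. Popov's theorem \cite[Th\'eor\`eme~2.6(2)]{Rou06} then shows that these polarisations, together with $\det=W^{10}$, generate the invariants of any number $n$ of vectors. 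As before, passing from $G'_4$ to the full reparametrisation group only amounts to taking weighted-homogeneous components.

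\emph{Degree bound.} Take a monomial $\prod P_\alpha$ in the generators of total weight $m=7q+r$. By additivity of the defect,
\[
m=7\sum_\alpha\deg_dP_\alpha+\sum_\alpha\epsilon(P_\alpha)\ \ge\ 7\deg_d\Big(\prod_\alpha P_\alpha\Big),
\]
so $\deg_d\le q$ for every monomial, hence for every invariant $P$. Equality $\deg_d=q$ forces $\sum\epsilon=r$, and each factor must attain its tabulated upper bound for $\deg_d$.

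\emph{The case $r\in\{0,1,2\}$.} Since every generator has defect $0$, $1$ or $\ge3$, the condition $\sum\epsilon=r\le2$ forces every factor to have defect $0$ or $1$. Exactly $r$ factors have defect one, namely $f_1'$, $D^8$, $M^8$, $R^{15}$ and their polarisations. The degree-$q$ part in $d$ of the monomial is then the product of the top $d$-parts of the factors.

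It remains to compute these top parts, and this is the step I expect to be the main obstacle. For $\Lambda^7$ and $D^8$ they are read off directly from the displayed formulas. For $M^8$, $Q^{14}$, $R^{15}$, $X^{21}$ I would take the top $d$-parts of the syzygies \eqref{eq:QRX} and divide by $v_1$, which is legitimate because the polynomial ring is a domain. For example, $v_1R^{15}_{\rm top}=(D^8_{\rm top})^2$ because $D^6$ has no $d$, which gives $R^{15}_{\rm top}=v_1\det_{123}(v,a,d)^2$. Similarly, $v_1M^8_{\rm top}=3W_{12}\,\Lambda^7_{\rm top}$. For $Q^{14}$, the term $-10\Lambda^5N^{10}$ would have $d$-degree $1<2$, so $v_1Q^{14}_{\rm top}=\Lambda^7_{\rm top}D^8_{\rm top}$.

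For $X^{21}$ I would first substitute $Q^{14}$, since $D^8Q^{14}$ and $\Lambda^7R^{15}$ both have $d$-degree $3$. The top part is then
\[
v_1X^{21}_{\rm top}=4D^8_{\rm top}Q^{14}_{\rm top}-5\Lambda^7_{\rm top}R^{15}_{\rm top}=(4-5)\,v_1^3(v_1d_2-v_2d_1)\det\nolimits_{123}(v,a,d)^2 .
\]
This must be checked not to vanish. Since Merker's constants were only transcribed up to scalars, nonvanishing of the combination is the delicate point. To settle it I would either verify the constants from \cite{Mer10} or argue that $X^{21}$ has exact $d$-degree $3$ by evaluating at a specific jet. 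Finally, the leading parts of polarised generators are the corresponding polarisations of these leading parts, because polarisation commutes with taking $d$-homogeneous components.
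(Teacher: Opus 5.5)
Your proposal is correct and is essentially the paper's proof. It uses Merker's generators plus Popov's theorem for generation in all dimensions. It uses additivity of the defect both for the bound $\deg_dP\le q$ and for the reduction, when $r\le2$, to products of defect-$0$ and defect-$1$ factors. It reads the leading parts off the top $d$-parts of the syzygies.

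There are two small corrections.

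\emph{Popov's theorem.} The paper applies Th\'eor\`eme~2.6(1) of \cite{Rou06} directly, since Merker's system is already complete for $4=\dim V$ vectors. Your restriction to three vectors followed by 2.6(2) with $\det=W^{10}$ is valid but unnecessary.

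\emph{The $M^8$ syzygy.} It reads $(f_1')^2M^8=-5\Lambda^5\Lambda^5+3\Lambda^3\Lambda^7$, so the correct relation is $v_1^2M^8_{\rm top}=3W_{12}\Lambda^7_{\rm top}$. The same applies to your sentence saying you would divide by $v_1$ for all four generators: for $M^8$ you must divide by $v_1^2$. As you wrote it, with a single factor $v_1$, the identity is not weight-homogeneous (weights $9$ and $10$) and would give the wrong leading part.

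Your caution about $4-5\neq0$ is fair; the paper simply uses the transcribed constants. It does not matter for how the proposition is used later. Whatever the constants, $X^{21}_{\rm top}$ is a scalar multiple (possibly zero) of $v_1^2(v_1d_2-v_2d_1)\det\nolimits_{123}(v,a,d)^2$, hence free of $b$.
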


\begin{proof}
Merker's theorem exhibits a complete system of invariants (in the sense of \cite[D\'ef.~2.5]{Rou06}) of a system of $4=\dim V$ vectors of $V=\CC^4$, the space of $4$-jets of one scalar function, for the unipotent group $G'_4$; by Popov's theorem \cite[Th\'eor\`eme~2.6(1)]{Rou06} it is a complete system for any number of vectors, i.e.\ in every dimension $n\ge4$. Polarisation does not change the weight nor the degree in $d$, so the table gives $\deg_dP\le m/7$, i.e.\ $\deg_dP\le q$, for every invariant $P$ of weight $m=7q+r$. A monomial in the generators of weight $m$ and $d$-degree $q$ has total defect $r$; since the defect is additive and every generator has defect $0$, $1$ or $\ge3$, for $r\le2$ such a monomial is a product of $r$ (polarised) defect-one generators and of (polarised) defect-zero generators, each with its extremal $d$-degree, and its leading $d$-part is the product of the leading parts of the factors. The leading parts \eqref{eq:lead4b} are read off from $f_1'$, from $D^8$, from $(f_1')^2M^8=-5\Lambda^5\Lambda^5+3\Lambda^3\Lambda^7$ (only $\Lambda^7$ contains $d$) and from $f_1'R^{15}=D^8D^8-12D^6N^{10}$ (only the first term reaches $d$-degree $2$). The leading parts \eqref{eq:lead4} are computed from \eqref{eq:QRX} by taking the parts of $d$-degree $2$, $2$ and $3$: since $\deg_dN^{10}\le1$ and $\deg_dD^6=0$, one gets $f_1'Q^{14}_{\rm top}=\Lambda^7_{\rm top}D^8_{\rm top}$ with $D^8_{\rm top}=v_1\det_{123}(v,a,d)$, then $f_1'R^{15}_{\rm top}=(D^8_{\rm top})^2$, and finally $f_1'X^{21}_{\rm top}=4D^8_{\rm top}Q^{14}_{\rm top}-5\Lambda^7_{\rm top}R^{15}_{\rm top}=(4-5)\,v_1\Lambda^7_{\rm top}\det_{123}(v,a,d)^2$, which gives \eqref{eq:lead4}.
\end{proof}

The essential feature of \eqref{eq:lead4b} and \eqref{eq:lead4} is that \emph{none of the leading parts involves $b=f'''$}: they are polynomials in the three vectors $v,a,d$ only. This is in contrast with the Wronskian $W^{10}=\det(v,a,b,d)$, which involves all four derivatives but has $\deg_d/\operatorname{weight}=1/10<1/7$ and therefore never contributes to the extremal graded piece.

\subsection{The extremal quotient and its constituents}
As in \S\ref{sec:quotient}, the chain rule for $y=\phi(x)$ reads $\tilde d=Jd+(\text{terms in }v,a,b)$, so the degree in $d$ defines an increasing filtration $F_\bullet$ of $\Ez{4}{m}$ by subbundles, and for $m=7q+r$ Proposition~\ref{prop:alln4} gives $F_q=\Ez{4}{m}$. For $m=7q+r$ with $q\ge1$ and $r\in\{0,1,2\}$ let
\[
G_m:=\Ez{4}{m}\big/F_{q-1}\Ez{4}{m}.
\]
For every $p\ge0$ and $t\ge0$ we also write $B_{p,t}:=E^{GG}_{2,p}T^*_Z\otimes\Sym^t\OmZ$, where $E^{GG}_{k,m}$ denotes the Green--Griffiths bundle of (not necessarily invariant) jet differentials \cite[\S2.1]{Div08}; note that $B_{p,t}\otimes B_{p',t'}\to B_{p+p',t+t'}$ by multiplication of polynomials.

\begin{proposition}\label{prop:G4}
Let $n\ge4$, $q\ge1$, $r\in\{0,1,2\}$ and $m=7q+r$.
\begin{enumerate}
\item $G_m$ is a nonzero vector bundle, and the map $P\mapsto[z^q]P(v,a,b,zd)$ (the coefficient of $z^q$, i.e.\ the homogeneous part of degree $q$ in $d$, as in the proof of Proposition~\ref{prop:quotient}) identifies it with a subbundle of $B_{3q+r,q}=E^{GG}_{2,3q+r}T^*_Z\otimes\Sym^q\OmZ$.
\item For every $(p,t)\ne(0,0)$ the bundle $B_{p,t}$ carries a filtration by subbundles whose graded pieces are direct sums of Schur bundles $\Sch{\lambda}\OmZ$ with $\lambda$ nonempty and having at most three rows; the same holds for $G_m$.
\item For every $\ell\ge1$, multiplication induces a bundle map $\Sym^\ell G_m\to B_{\ell(3q+r),\ell q}$ which is nonzero: it sends $\big(v_1^{\,r}(\Lambda^7_{\rm top})^q\big)^{\otimes\ell}$ to $v_1^{\,\ell r}(\Lambda^7_{\rm top})^{\ell q}\ne0$.
\end{enumerate}
\end{proposition}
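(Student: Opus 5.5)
The plan is to mimic the proof of Proposition~\ref{prop:quotient} in the three items, replacing Rousseau's generators by Proposition~\ref{prop:alln4}.

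\emph{Item (1).} The chain rule gives $\tilde d=Jd+(\text{terms in }v,a,b)$, so the filtration $F_\bullet$ by $\deg_d$ consists of subbundles, and $F_q=\Ez{4}{m}$ by Proposition~\ref{prop:alln4}. The map $\sigma_q(P)=[z^q]P(v,a,b,zd)$ therefore has kernel exactly $F_{q-1}$. Next I determine the bidegree of $\sigma_q(P)$. By Proposition~\ref{prop:alln4}, $\sigma_q(P)$ is a combination of products of the leading parts \eqref{eq:lead4b}, \eqref{eq:lead4} and their polarisations. These depend only on $(v,a,d)$ and are linear of degree $q$ in $d$. Removing the $d$-factors leaves a polynomial in $(v,a)$ of weighted degree $m-4q=3q+r$.

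For the transformation law, substituting the chain rule and keeping the $d$-degree-$q$ part replaces $d$ by $Jd$. However, $(v,a)$ transforms as the full 2-jet $(Jv,Ja+H(v,v))$, and $b$ may reappear from the lower terms of $\tilde d$ only in lower $d$-degree. Hence the $d$-degree-$q$ part of the transformed polynomial is obtained from $\sigma_q(P)$ by applying the 2-jet transformation to $(v,a)$ and $J$ to $d$. This is precisely the transition rule of $E^{GG}_{2,3q+r}\otimes\Sym^q\OmZ$, so $\sigma_q$ is a bundle morphism into $B_{3q+r,q}$.

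To see that the image is a subbundle, I use that the polarisations form a $\GL_n$-orbit. The image in each fibre is therefore the $\GL_n$-span of fixed polynomials, but this span must also be stable under the full jet group including $H$. This follows because the image of an intertwining map is stable under the transition group. The image thus has constant rank, and $G_m\cong\Ez{4}{m}/F_{q-1}$ embeds as a subbundle. Nonvanishing follows from $\sigma_q\big(v_1^r(\Lambda^7)^q\big)=v_1^r(\Lambda^7_{\rm top})^q\ne0$.

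\emph{Item (2).} I filter $E^{GG}_{2,p}$ by the degree in $a$, which is the standard Green--Griffiths filtration of \cite[\S2.1]{Div08}. Its graded pieces are $\bigoplus_{i+2j=p}\Sym^i\OmZ\otimes\Sym^j\OmZ$. Tensoring with $\Sym^t\OmZ$ gives graded pieces of $B_{p,t}$ that are triple tensor products of symmetric powers. By Pieri's rule (Littlewood--Richardson) these are sums of $\Sch{\lambda}\OmZ$ with at most three rows, all nonempty since $(p,t)\ne(0,0)$. For $G_m$, I intersect this filtration with the subbundle $G_m$. The graded pieces of $G_m$ are subbundles of those of $B$, hence $\GL$-stable sub-sums of the same Schur summands. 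This uses complete reducibility fibrewise together with the fact that the graded transition functions are reductive, i.e.\ given by $J$ alone. These subsums are again sums of Schur bundles with at most three rows, so the statement transfers.

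\emph{Item (3).} Multiplication of polynomials commutes with the transition substitutions, and $\sigma_{\ell q}$ is multiplicative on products. So multiplying $\ell$ elements of $G_m\subset B_{3q+r,q}$ inside the algebra of polynomials in $(v,a,d)$ defines $\Sym^\ell G_m\to B_{\ell(3q+r),\ell q}$. Its value on the displayed element is the nonzero polynomial $v_1^{\ell r}(\Lambda^7_{\rm top})^{\ell q}$, since polynomial rings are domains.

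The main obstacle is item (1), specifically checking that the transition law of $\sigma_q(P)$ involves only the 2-jet of the coordinate change on $(v,a)$ and no $b$ at top $d$-degree. Here I will argue by tracking the bidegree in $(d,\text{rest})$ under the substitution, exactly as in Step~2 of Proposition~\ref{prop:quotient}.
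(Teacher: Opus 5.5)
Your proposal is correct and follows the paper's proof essentially step by step. The paper likewise uses Proposition~\ref{prop:alln4} to see that the top $d$-degree parts are $b$-free: it first maps into $E^{GG}_{3,3q+r}T^*_Z\otimes\Sym^q\OmZ$ and then notes that $b$-free polynomials form the subbundle $E^{GG}_{2,3q+r}T^*_Z$, which is equivalent to your direct transition-law argument; it then filters $B_{p,t}$ by the degree in $a$ and applies Pieri's rule, and defines the map in (3) by multiplication inside the ambient bundles $B_{p,t}$, as you do (your aside that $\sigma_{\ell q}$ is multiplicative is harmless but unnecessary, since for $r>0$ the product need not lie in the extremal quotient of $\Ez{4}{\ell m}$).
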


\begin{proof}
(1) The top $d$-degree part of $P(\tilde v,\tilde a,\tilde b,\tilde d)$ is $P_{\rm top}(\tilde v,\tilde a,\tilde b,Jd)$, where $(\tilde v,\tilde a,\tilde b)$ is the full $3$-jet transform of $(v,a,b)$; hence $\sigma_q(P):=[z^q]P(v,a,b,zd)$ is a bundle map from $\Ez{4}{m}$ to the bundle of polynomials of weight $m-4q=3q+r$ in $(v,a,b)$, transforming as Green--Griffiths $3$-jet differentials, with values in $\Sym^q\OmZ$, i.e.\ to $E^{GG}_{3,3q+r}T^*_Z\otimes\Sym^q\OmZ$; its kernel is $F_{q-1}$. By Proposition~\ref{prop:alln4} its image is spanned, in every trivialisation, by products of polarisations of the polynomials \eqref{eq:lead4b} and \eqref{eq:lead4}, which do not involve $b$. Polynomials not involving $b$ form the subbundle $E^{GG}_{2,3q+r}T^*_Z\subset E^{GG}_{3,3q+r}T^*_Z$ (the substitution $(v,a)\mapsto(\tilde v,\tilde a)$ does not introduce $b$). The image is a $\GL_n$-stable subspace in every fibre, so it has constant rank and glues to a subbundle; it is nonzero since it contains $v_1^{\,r}(\Lambda^7_{\rm top})^q$, the leading part of the invariant $(f_1')^r(\Lambda^7)^q$.

(2) The substitution $\tilde a=Ja+H(v,v)$ does not increase the degree in $a$, so $E^{GG}_{2,p}T^*_Z$ is filtered by the degree in $a$, with graded pieces $\Sym^{p-2j}\OmZ\otimes\Sym^j\OmZ$ (polynomials of degree $j$ in $a$ and $p-2j$ in $v$). Hence $B_{p,t}$ has a filtration with graded pieces $\Sym^{p-2j}\OmZ\otimes\Sym^j\OmZ\otimes\Sym^t\OmZ$, and by Pieri's rule \cite[Prop.~1]{Div08} a tensor product of three symmetric powers only contains $\Sch{\lambda}$ with $\lambda_i=0$ for $i>3$; the partitions are nonempty because $(p,t)\ne(0,0)$. The induced filtration on the subbundle $G_m\subset B_{3q+r,q}$ has graded pieces which are $\GL_n$-stable subbundles of these tensor bundles; since the isotypic decomposition of a tensor bundle is canonical, a subbundle with $\GL_n$-stable fibres is a direct sum of copies of the Schur bundles $\Sch{\lambda}\OmZ$ occurring in it.

(3) Multiplication of polynomials in $(v,a,d)$ is compatible with the transition functions of the bundles $B_{p,t}$, and the product of $\ell$ copies of the nonzero polynomial $v_1^{\,r}(\Lambda^7_{\rm top})^q$ is nonzero. (For $r=0$ the image lies in $G_{7q\ell}\subset B_{3q\ell,q\ell}$; for $r>0$ it need not lie in the extremal quotient of $\Ez{4}{\ell m}$, which is why we work in the ambient bundles $B_{p,t}$.)
\end{proof}

\begin{lemma}\label{lem:vanishing4}
Let $Z\subset\PP^N$ be a smooth complete intersection of $c$ hypersurfaces of degrees $\ge2$ with $4c<N$. Then $H^0(Z,B_{p,t})=0$ for every $(p,t)\ne(0,0)$; in particular $H^0(Z,G_m)=0$ for every $m\ge7$ with $m\equiv0,1,2\pmod 7$.
\end{lemma}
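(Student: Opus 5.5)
The plan is to reduce everything to Theorem~\ref{thm:BR}, using the filtration from Proposition~\ref{prop:G4}(2). Global sections are left exact along filtrations. If $0\to F'\to F\to F''\to0$ is exact and $H^0(F')=H^0(F'')=0$, then $H^0(F)=0$. So an induction on the length of the filtration of $B_{p,t}$ by subbundles reduces the vanishing of $H^0(Z,B_{p,t})$ to the vanishing of $H^0$ on each graded piece. Each graded piece is a finite direct sum of Schur bundles $\Sch{\lambda}\OmZ$ with $\lambda$ nonempty and at most three rows. It therefore suffices to show $H^0(Z,\Sch{\lambda}\OmZ)=0$ for every such $\lambda$.

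For such $\lambda$, every column of the Young diagram has length at most $3$, so $\lambda'_j\le3$ for all $j$. Hence
\[
\sum_{j=1}^{c}\lambda'_j\le3c<N-c=\dim Z,
\]
where the strict inequality is exactly the hypothesis $4c<N$. Theorem~\ref{thm:BR} then gives $H^0(Z,\Sch{\lambda}\OmZ)=0$. This proves $H^0(Z,B_{p,t})=0$ for every $(p,t)\ne(0,0)$.

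For the second assertion, take $m=7q+r$ with $q\ge1$ and $r\in\{0,1,2\}$. These are exactly the integers $m\ge7$ with $m\equiv0,1,2\pmod7$. By Proposition~\ref{prop:G4}(1), $G_m$ is a subbundle, in particular a subsheaf, of $B_{3q+r,q}$, and $(3q+r,q)\ne(0,0)$ because $q\ge1$. Since $H^0$ of a subsheaf injects into $H^0$ of the ambient sheaf, $H^0(Z,G_m)=0$. Alternatively, one can run the same filtration argument directly on $G_m$, using the second half of Proposition~\ref{prop:G4}(2).

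The only step needing care is the passage from graded pieces to the filtered bundle. This is routine left exactness, but it requires the filtration to be by subbundles, so that the quotients are honest vector bundles. It also requires the identification of each graded piece with a direct sum of Schur bundles. Both are supplied by Proposition~\ref{prop:G4}(2), so I expect no real obstacle. One edge case is whether the degrees $\ge2$ assumption is needed: it is, to apply Theorem~\ref{thm:BR}, and it is part of the hypothesis.
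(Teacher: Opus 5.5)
Your proposal is correct and follows the paper's proof essentially verbatim. Both arguments use the bound $\sum_{j\le c}\lambda'_j\le 3c<N-c$ for nonempty partitions with at most three rows, apply Theorem~\ref{thm:BR} to each graded piece of the filtration of Proposition~\ref{prop:G4}(2), and deduce the assertion on $G_m$ from the inclusion $G_m\subset B_{3q+r,q}$. The only difference is that you write out the left-exactness induction yourself, whereas the paper cites the filtration lemma \cite[Lemma~2]{Div08}.
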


\begin{proof}
A nonempty partition with at most three rows satisfies $\sum_{j\le c}\lambda'_j\le3c<N-c$, so by Theorem~\ref{thm:BR} every graded piece of the filtration of Proposition~\ref{prop:G4}(2) has no nonzero global sections, and the filtration lemma \cite[Lemma~2]{Div08} gives $H^0(Z,B_{p,t})=0$; the assertion on $G_m$ follows since $G_m\subset B_{3q+r,q}$.
\end{proof}

\begin{proof}[Proof of Theorem~\ref{thm:k4}]
Note that $4c<N$ forces $n=N-c>3c\ge3$, so $n\ge4$ and \S\ref{sec:k4} applies. Write $m=7q+r$ with $q\ge1$, $r\in\{0,1,2\}$, and suppose $\Ez{4}{m}$ ample. Then $G_m$ is ample by (H1), $\Sym^\ell G_m$ is globally generated for some $\ell\ge1$ by (H2), and so is its image in $B_{\ell(3q+r),\ell q}$ under the multiplication map of Proposition~\ref{prop:G4}(3), by (H3). This image is a nonzero subsheaf of $B_{\ell(3q+r),\ell q}$, hence has a nonzero global section, contradicting Lemma~\ref{lem:vanishing4}.
\end{proof}

\begin{corollary}\label{cor:conj4}
Readings \textup{(A)} and \textup{(A$'$)} of Conjectures~\ref{conj:DT} and~\ref{conj:Deng} fail at $k=4$ for every $(N,c)$ with $4c<N\le5c$, in particular for hypersurfaces in $\PP^5$; reading \textup{(B)} fails there at every weight $m\ge7$ with $m\equiv0,1,2\pmod7$.
\end{corollary}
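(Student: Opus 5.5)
The corollary is essentially a restatement of Theorem~\ref{thm:k4} combined with the numerology of the conjecture's threshold. I would mirror the proof of Corollary~\ref{cor:conj}, in three steps.

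\emph{Step 1: $k=4$ lies in the conjectured range.} The condition $k\ge N/c-1$ with $k=4$ reads $N\le5c$. So for every pair $(N,c)$ with $4c<N\le5c$, order four is within the scope of Conjectures~\ref{conj:DT} and~\ref{conj:Deng}. For $c=1$ this range is exactly $N=5$, which gives hypersurfaces in $\PP^5$.

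\emph{Step 2: failure of reading (B) at the stated weights.} Fix any smooth complete intersection $Z\subset\PP^N$ of multidegree $\ge2$ with $4c<N$. Theorem~\ref{thm:k4} says directly that $\Ez{4}{m}$ is not ample for every $m\ge7$ with $m\equiv0,1,2\pmod 7$. This is the claimed failure of (B) at those weights. Since the statement holds for \emph{every} smooth complete intersection, and not only a general one, no genericity assumption and no bound on the degrees can restore ampleness. This disposes of the ``general hypersurfaces of sufficiently high degree'' hypothesis in the conjectures.

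\emph{Step 3: failure of (A) and (A$'$).} For (A$'$), given any $M\ge1$, I would take the multiples $m=7M s$ with $s\ge1$. These satisfy $m\equiv0\pmod7$ and $m\ge7$, so Theorem~\ref{thm:k4} applies and $\Ez{4}{m}$ fails to be ample for infinitely many multiples of $M$. Hence there is no $M$ for which all sufficiently large multiples give ample bundles. Reading (A) is stronger than (A$'$) (take $M=1$), so it fails as well.

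There is no real obstacle here. The only points to check are the threshold arithmetic in Step~1 and that the residue class $0\pmod 7$ meets every arithmetic progression $M\ZZ_{>0}$ infinitely often. Both are immediate.
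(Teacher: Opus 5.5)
Your proposal is correct and is essentially the paper's argument: the paper likewise deduces the corollary from Theorem~\ref{thm:k4} exactly as Corollary~\ref{cor:conj} is deduced from Theorem~\ref{thm:ci}. In particular it uses that non-ampleness holds for every smooth complete intersection, so genericity cannot help, and that the admissible weights contain infinitely many multiples of any $M$. Your explicit choice $m=7Ms$ is the precise form of the paper's phrase that the classes $0,1,2 \pmod 7$ ``include all large multiples of any $M$''; literally, that phrase holds only for multiples of $7M$.
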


\begin{proof}
As for Corollary~\ref{cor:conj}, using the weights $m\ge7$ with $m\equiv0,1,2\pmod7$, which include all large multiples of any $M$.
\end{proof}

\begin{corollary}[Every dimension]\label{cor:alldim}
Let $n\ge3$ and let $c$ be an integer with $n/4\le c<n/2$. Then for every smooth complete intersection $Z\subset\PP^{n+c}$ of codimension $c$ and multidegree $\ge2$, readings \textup{(A)} and \textup{(A$'$)} of Conjectures~\ref{conj:DT} and~\ref{conj:Deng} fail at the threshold order $k=\lceil n/c\rceil$ of the conjecture itself. Such a $c$ exists for every $n\ge3$.
\end{corollary}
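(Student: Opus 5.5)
The proof is a case split on the value of $k=\lceil n/c\rceil$, after which Corollaries~\ref{cor:conj} and~\ref{cor:conj4} do all the work. Throughout, put $N=n+c$, so that $Z\subset\PP^N$ has codimension $c$ and dimension $n$. The threshold of Conjectures~\ref{conj:DT} and~\ref{conj:Deng} is $k\ge N/c-1=n/c$, and its smallest integral value is $k=\lceil n/c\rceil$. The hypothesis $n/4\le c<n/2$ is equivalent to $2<n/c\le4$, so $\lceil n/c\rceil\in\{3,4\}$.

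\emph{Case $k=3$}, that is, $2c<n\le3c$.
\begin{enumerate}
\item[(i)] The inequality $2c<n$ is equivalent to $3c<n+c=N$, which is the hypothesis of Theorem~\ref{thm:ci}.
\item[(ii)] The inequality $n\le3c$ is equivalent to $N\le4c$, which says that $k=3$ lies in the range of the conjecture.
\item[(iii)] Corollary~\ref{cor:conj} applies directly. For every $M\ge1$, the bundle $\Ez{3}{m}$ is non-ample for all $m$ in $\operatorname{lcm}(5,M)\ZZ_{>0}$. This falsifies readings (A) and (A$'$) for every smooth complete intersection of multidegree $\ge2$, and a fortiori for general ones.
\end{enumerate}

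\emph{Case $k=4$}, that is, $3c<n\le4c$.
\begin{enumerate}
\item[(i)] The inequality $3c<n$ is equivalent to $4c<N$, which is the hypothesis of Theorem~\ref{thm:k4}.
\item[(ii)] The inequality $n\le4c$ is equivalent to $N\le5c$, so $k=4$ is exactly the threshold order.
\item[(iii)] Corollary~\ref{cor:conj4} applies. The weights $m\ge7$ with $m\equiv0,1,2\pmod7$ contain all large multiples of $7M$ for any $M$. Hence readings (A) and (A$'$) fail again.
\end{enumerate}

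It remains to show that such a $c$ exists for every $n\ge3$; this is the only part not already a formal consequence of the earlier results. Take $c=\lceil n/4\rceil\ge1$, so that $c\ge n/4$ holds automatically.
\begin{enumerate}
\item[(i)] For $n\ge4$ we have $c\le(n+3)/4$. Moreover $(n+3)/4<n/2$ if and only if $n>3$. Hence $c<n/2$.
\item[(ii)] For $n=3$ we have $c=1<3/2$ by direct check.
\end{enumerate}
As a sanity check, the smallest instances are $(n,c)=(3,1)$ with $k=3$, $(4,1)$ with $k=4$, and $(5,2)$ with $k=3$. The case $(3,1)$ is Theorem~\ref{thm:main}.

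I do not expect a real obstacle here. The only care needed is to check that the hypotheses of Theorems~\ref{thm:ci} and~\ref{thm:k4}, namely the strict inequalities $3c<N$, resp.\ $4c<N$, together with multidegree $\ge2$, match the boundary cases exactly. In particular, the endpoint $c=n/4$ with $4\mid n$ gives $n/c=4$, hence $k=4$, and $4c=n<N$ holds there.
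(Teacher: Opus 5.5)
Your proposal is correct and follows the paper's proof: you use the same split into $n/3\le c<n/2$ (threshold $k=3$, via Corollary~\ref{cor:conj}) and $n/4\le c<n/3$ (threshold $k=4$, via Corollary~\ref{cor:conj4}), with the same translations $3c<N\le4c$ and $4c<N\le5c$. The only cosmetic difference is in the existence step: you exhibit $c=\lceil n/4\rceil$ explicitly, whereas the paper observes that $[n/4,n/2)$ contains $1$ for $n=3$ and has length $n/4\ge1$ for $n\ge4$.
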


\begin{proof}
Write $N=n+c$; the threshold of the conjecture is $k\ge N/c-1=n/c$. If $n/3\le c<n/2$, then $3c<N\le4c$, the threshold is $k=3$, and Corollary~\ref{cor:conj} applies; if $n/4\le c<n/3$, then $4c<N\le5c$, the threshold is $k=4$, and Corollary~\ref{cor:conj4} applies. The interval $[n/4,n/2)$ contains $c=1$ for $n=3$ and has length $n/4\ge1$ for $n\ge4$, so it always contains an integer.
\end{proof}

\begin{remark}
The corollary leaves out the small codimensions $c<n/4$, where the threshold is $k\ge5$ (for instance hypersurfaces of dimension $n\ge5$, see Remark~\ref{rem:general}), and the large codimensions $c\ge n/2$, where the threshold is $k\le2$: for $k=1$ the conjecture is the theorem of Brotbek--Darondeau and Xie, and for $k=2$ the obstruction of Remark~\ref{rem:ordertwo} is exactly complementary to the threshold. In each dimension the smallest instances are: $n=3$, $c=1$, order $3$; $n=4$, $c=1$, order $4$, the only dimension for which order $4$ is indispensable; $n=5,6$, $c=2$, order $3$; $n=7,8$, $c=2$ at order $4$ or $c=3$ at order $3$.
\end{remark}

\begin{remark}[Threefolds and order four]
On a smooth threefold $X\subset\PP^4$ the same construction gives the extremal quotient $G_7=\Sch{(3,1,0)}\Om$ of $\E{4}{7}$ (two rows: $\Lambda^7_{\rm top}$ only involves $v$ and $d$), so $\E{4}{7}$ is not ample on any smooth hypersurface of $\PP^4$ of degree $\ge2$ (the products of $\ell$ leading parts of polarisations of $\Lambda^7$ lie in $\Sym^{3\ell}\Om\otimes\Sym^\ell\Om$, which has two rows), using Merker's description in dimension $3$ \cite[\S11]{Mer10} or the restriction of the dimension-$4$ generators. For $q\ge2$, however, $G_{7q}$ contains the three-row leading part of $Q^{14}$, and Theorem~\ref{thm:BR} on $\PP^4$ requires at most two rows: the method gives no information on $\E{4}{7q}$ for $q\ge2$ on threefolds. (On threefolds the conjecture predicts ampleness for every $k\ge3$, so order $4$ is within its range, but $k=3$ is the threshold order there and is already settled by Theorem~\ref{thm:main}.)
\end{remark}

\begin{remark}[Why $k-1$ rows]\label{rem:whyrows}
In orders $3$ and $4$ the generators achieving the extremal ratio $\deg_{f^{(k)}}/\operatorname{weight}=1/(2k-1)$ have leading parts which involve only $k-1$ of the $k$ vectors $f',\dots,f^{(k)}$: $(v,b)$ for $k=3$, $(v,a,d)$ for $k=4$. By Pieri's rule their span has constituents with at most $k-1$ rows. In order $2$ the leading parts of the $W_{ij}$ involve both vectors $(v,a)$, and the span still has only $k=2$ rows. In all three cases the number of rows is strictly less than what is needed for sections, because the Wronskian $\det(f',\dots,f^{(k)})$, which is the only source of $k$-row constituents and hence of global sections on hypersurfaces of dimension $k$ \cite{Div08}, has ratio $2/(k(k+1))<1/(2k-1)$ as soon as $k\ge3$; for $k=2$ the Wronskian is $W_{ij}$ itself, the two ratios coincide, and this is why order $2$ is the only order at which the obstruction is exactly complementary to the threshold of the conjecture (Remark~\ref{rem:ordertwo}). Whether the pattern persists for $k\ge5$ depends on the still unknown structure of the invariant algebras in dimension $\ge3$; see Remark~\ref{rem:general}.
\end{remark}

\section{Final remarks}\label{sec:remarks}

We collect here a number of remarks on the scope of the argument: why the weights must be multiples of $2k-1$, how the bundles $E_{k,m}$ for different $m$ are related and why this matters for the conjecture, how the theorems relate to Deng's results and to the geometry of singular jets, why nothing of the kind happens on surfaces, and what is known and what is not for orders $k\ge5$.

\begin{remark}[The restriction $5\mid m$ is essential]\label{rem:5divides}
Let $m=5q+r$ with $1\le r\le4$ and $q\ge r$. Repeating the weight count of Step~1 in the proof of Proposition~\ref{prop:quotient}, a monomial in the generators of weight $m$ has $\deg_b\le h+d\le q$, with equality $h+d=q$ if and only if $u+3w+d=r$. Now $d>0$ is allowed: for instance $D\cdot A_{12;1}^{\,q-1}$ has weight $6+5(q-1)=5q+1$ and $\deg_b=q$. Let $G:=\E{3}{m}/F_{q-1}$ be the extremal quotient for the $b$-filtration. Since \eqref{eq:chainrule} is also affine-linear in $a$, the degree in $a$ defines a second increasing filtration by subbundles on $G$, and the extremal quotient $G'$ of $G$ for this filtration is again a quotient bundle of $\E{3}{m}$. In every trivialisation $G'$ is spanned by the leading parts (of maximal degree in $b$, then in $a$) of monomials with $h+d=q$ and $d$ maximal, and $d\ge1$ for all such monomials because $r\ge1$ allows $d=r$ (take $u=w=0$, $h=q-r$). These leading parts contain the factor $\det(v,a,b)^d$, so every irreducible $\GL_3$-constituent $\Sch{\lambda}$ of $G'$ has $\lambda_3\ge d\ge1$: three rows. For example, for $r=1$ one finds $G'\cong K_X\otimes\Sch{(2q-2,q-1,0)}\Om\cong\Sch{(2q-1,q,1)}\Om$, which has many sections on hypersurfaces of large degree. Theorem~\ref{thm:BR} therefore gives no information on $G'$, and the method of this note does not apply to $\E{3}{m}$ for $5\nmid m$. The simplest untreated case is $\E{3}{6}$, whose extremal quotient is $K_X$ itself. The same discussion applies verbatim in Theorem~\ref{thm:ci}, with the minors $D_{ijk}$ in place of $D$. In order four the situation is slightly better: the generators of defect $1$ (in the sense of \S\ref{sec:k4}) have $b$-free leading parts, which is why Theorem~\ref{thm:k4} covers the residue classes $m\equiv0,1,2\pmod7$; but for $m\equiv3,\dots,6\pmod7$ the extremal quotient involves generators of defect $\ge3$ such as $W^{10}$, whose leading part $\det(v,a,b,d)$ has four rows, and the method gives no information.
\end{remark}

\begin{remark}[How the bundles $E_{k,m}$ are related for varying $m$]\label{rem:varym}
This point is rarely made explicit in the literature, so we spell it out. The only functorial relation between $E_{k,m}T^*_Z$ and $E_{k,ml}T^*_Z$ is the multiplication map
\begin{equation}\label{eq:multmap}
\mu_{m,l}\colon\ \Sym^lE_{k,m}T^*_Z\longrightarrow E_{k,ml}T^*_Z ,
\end{equation}
a morphism of vector bundles compatible with the transition functions \eqref{eq:chainrule}. For $k=1$ one has $E_{1,m}=\Sym^m\Omega^1_Z$, the algebra $\bigoplus_mE_{1,m}$ is the symmetric algebra of $\Omega^1_Z$, and $\mu_{m,l}$ is surjective for all $m,l$; together with Hartshorne's theorem that $E$ is ample if and only if $\Sym^mE$ is, this is why ampleness of $E_{1,m}$ does not depend on $m$. For $k\ge2$ the algebra $\bigoplus_mE_{k,m}$ is not generated in weight $1$, nor in any single weight in general, and $\mu_{m,l}$ need not be surjective: for instance $\mu_{1,l}$ has image $\Sym^l\Omega^1_Z\subsetneq E_{k,l}$ as soon as $l\ge3$ (it misses the $W_{ij}$), and for $k=3$ the image of $\mu_{3,2}\colon\Sym^2E_{3,3}\to E_{3,6}$ consists of polynomials of degree $0$ in $f'''$, since $E_{3,3}$ is spanned by the $v_iv_jv_k$ and the $W_{ij}$, so it misses $D$. (Non-injectivity of $\mu_{m,l}$ is not the relevant issue: $\Sym^l\Sym^m\Omega^1_Z\to\Sym^{ml}\Omega^1_Z$ already has a kernel for $m\ge2$.) Consequently, when $\mu_{m,l}$ is not surjective, ampleness of $E_{k,m}$ gives only an ample \emph{subsheaf} of $E_{k,ml}$, from which nothing follows for $E_{k,ml}$ itself, and conversely ampleness of $E_{k,ml}$ says nothing about $E_{k,m}$: no implication in either direction is available. Since Theorem~\ref{thm:main} settles $E_{3,5q}$ but leaves $E_{3,6}$ open (Remark~\ref{rem:5divides}), our results are compatible both with a genuine dependence of ampleness on $m$ and with non-ampleness at every weight.

The positivity which \emph{is} independent of $m$ lives on the Demailly--Semple tower, where $E_{k,m}T^*_Z=\pi_{k,0\,*}\cO_{Z_k}(m)$ and $\cO_{Z_k}(ml)=\cO_{Z_k}(m)^{\otimes l}$: bigness, the stable base locus, the augmented base locus and the notion of almost $k$-jet ampleness of \cite{Deng20} are properties of the line bundle $\cO_{Z_k}(1)$ and are insensitive to replacing $m$ by a multiple. This is also how Demailly proves that ampleness of $E_{k,m}T^*_X$ implies hyperbolicity \cite[7.7~iii) and Cor.~7.10]{Dem97}: the sections of $\Sym^pE_{k,m}T^*_X\otimes L^{-1}$ provided by Hartshorne's theorem are pushed through $\mu_{m,p}$ to sections of $\cO_{X_k}(mp)\otimes\pi^*L^{-1}$, whose base locus is exactly $X_k^{\rm sing}$ by \cite[Thm.~6.8~iii)]{Dem97}; from that point on the weight plays no role. Formulations of Conjecture~\ref{conj:DT} with ``for every $m$'' or ``$m\gg0$'' (\S\ref{sec:conjecture}) are natural for this line-bundle positivity, and for $k=1$ they are harmless, since $\Sym^m\OmZ$ is ample for one $m$ if and only if it is for all; they become delicate exactly when transferred to the vector bundles $E_{k,m}T^*_Z$ with $k\ge2$, for which the obstruction of this note depends on $m$ modulo $2k-1$ (whether ampleness itself does, our results do not decide).

There is, however, one general positive statement. Since the fibre algebra $\bigoplus_mE_{k,m}$ is the same finitely generated graded algebra at every point whenever finitely many generators are known (for $k\le3$ in every dimension by Proposition~\ref{prop:alln}, and in the cases computed in \cite{Mer10}), a standard pigeonhole argument on Veronese subrings shows that if the generators have weights $d_1,\dots,d_s$, then for every $m$ divisible by $s\cdot\operatorname{lcm}(d_1,\dots,d_s)$ every monomial of weight $ml$ in the generators is a product of $l$ monomials of weight $m$, so that $\mu_{m,l}$ is surjective for all $l\ge1$. For such $m$, $E_{k,ml}T^*_Z$ is a quotient of $\Sym^lE_{k,m}T^*_Z$, and ampleness of $E_{k,m}T^*_Z$ implies ampleness of $E_{k,ml}T^*_Z$ for every $l$ by (H1). Note however that such an $m$ is already a multiple of $5$ in order $3$ and of $7$ in order $4$, so this adds nothing to Theorems~\ref{thm:ci} and~\ref{thm:k4}; the real content of the multiplication map in our situation is the following.
\end{remark}

\begin{corollary}\label{cor:divisible}
Let $Z$ be as in Theorem~\ref{thm:ci} (or $Z=X\subset\PP^4$ as in Theorem~\ref{thm:main}) and let $m\ge1$. If $\Ez{3}{m}$ is ample, then the multiplication map $\mu_{m,5}\colon\Sym^5\Ez{3}{m}\to\Ez{3}{5m}$ is not surjective. Likewise, if $Z$ is as in Theorem~\ref{thm:k4} and $\Ez{4}{m}$ is ample, then $\mu_{m,7}$ is not surjective. In particular reading \textup{(B)} of Conjectures~\ref{conj:DT} and~\ref{conj:Deng} can hold at $k=3$ (resp.\ $k=4$), in the range of Theorem~\ref{thm:ci} (resp.\ \ref{thm:k4}), only at weights $m$ for which $\mu_{m,5}$ (resp.\ $\mu_{m,7}$) fails to be surjective.
\end{corollary}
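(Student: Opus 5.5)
The plan is to argue by contraposition. Assume that $\Ez{3}{m}$ is ample and that $\mu_{m,5}$ is surjective, and derive a contradiction with Theorem~\ref{thm:ci} (resp.\ Theorem~\ref{thm:main}) at the weight $5m$. Since $5m$ is divisible by $5$, those theorems say that $\Ez{3}{5m}$ is not ample on every $Z$ in the relevant range.

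The key step uses standard facts about ample bundles. Ampleness of $\Ez{3}{m}$ gives ampleness of $\Sym^5\Ez{3}{m}$: a symmetric power of an ample bundle is ample, being a quotient of the tensor power, which is ample by Hartshorne \cite{Har66}, and (H1) applies. If $\mu_{m,5}$ is surjective as a map of vector bundles, then $\Ez{3}{5m}$ is a quotient of this ample bundle, hence ample by (H1). This contradicts Theorem~\ref{thm:ci} with $q=m$ (resp.\ Theorem~\ref{thm:main}).

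Surjectivity is meant here as surjectivity of the bundle map. Since the fibre algebra is the same at every point, this is the same as fibrewise surjectivity, and the image is then automatically a subbundle. I would state this explicitly, so that "quotient bundle" in (H1) applies.

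For order four the argument is the same with $7$ in place of $5$. If $\Ez{4}{m}$ is ample and $\mu_{m,7}$ is surjective, then $\Ez{4}{7m}$ is ample. But $7m\equiv0\pmod 7$ and $7m\ge7$, so Theorem~\ref{thm:k4} forbids this.

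The statement about reading (B) is then just a restatement. Any weight at which $E_{k,m}$ is ample must be a weight where the corresponding multiplication map fails to be surjective.

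There is no real obstacle here. The only point needing care is citing that $\Sym^l$ of an ample bundle is ample, which is standard \cite[\S6]{Laz04}. An alternative that avoids it: by (H2), some $\Sym^r$ of $\Sym^5 E_{3,m}$ is globally generated. Push this through the composed surjection onto the extremal quotient $Q_m$ of $\Ez{3}{5m}$ from Proposition~\ref{prop:quotientn}, and conclude as in the proof of Theorem~\ref{thm:main}.
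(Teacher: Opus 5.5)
Your argument is correct and is essentially the paper's proof. If $\mu_{m,5}$ (resp.\ $\mu_{m,7}$) were surjective, then $\Ez{3}{5m}$ (resp.\ $\Ez{4}{7m}$) would be a quotient of the ample bundle $\Sym^5\Ez{3}{m}$ (resp.\ $\Sym^7\Ez{4}{m}$), contradicting Theorem~\ref{thm:ci} or~\ref{thm:main} (resp.\ Theorem~\ref{thm:k4}); you also rightly make explicit that symmetric powers of ample bundles are ample, which the paper leaves implicit in its appeal to (H1).

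Only your optional alternative at the end is circular as written, because applying (H2) to $\Sym^5\Ez{3}{m}$ already presupposes that it is ample. To avoid this, apply (H2) to $\Ez{3}{m}$ itself with exponent $5s$, and use the composite $\Sym^{5s}\Ez{3}{m}\to\Ez{3}{5ms}\to Q_{ms}$, which is surjective once $\mu_{m,5}$ is.
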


\begin{proof}
If $\mu_{m,5}$ were surjective, $\Ez{3}{5m}$ would be a quotient of $\Sym^5\Ez{3}{m}$, hence ample by (H1), contradicting Theorem~\ref{thm:ci}; similarly with $\mu_{m,7}$, $\Ez{4}{7m}$ and Theorem~\ref{thm:k4}.
\end{proof}

\begin{remark}[Deng's results and the role of singular jets]\label{rem:Deng}
The theorems above and Deng's results \cite{Deng20} look at the same objects from two sides which do not meet. Deng's Definition~1.2 calls $Z$ \emph{almost $k$-jet ample} if for some $(a_1,\dots,a_k)\in\NN^k$ the weighted tautological bundle $\cO_{Z_k}(a_k,\dots,a_1)$ on the Demailly--Semple tower is big with augmented base locus contained in the locus $Z_k^{\rm sing}$ of singular jets; his Theorem~C proves almost $k$-jet ampleness for $k\ge N/c-1$, and his Corollary~D produces an ample \emph{subbundle} $F\subset\Ez{k}{m}$ whose sections detect regular jets. Both statements concern, by design, what happens on regular jets: the augmented base locus is allowed to fill $Z_k^{\rm sing}$, and a subbundle of $\Ez{k}{m}$ says nothing about the quotients of $\Ez{k}{m}$. Our theorems, on the contrary, concern only what happens on singular jets, and say nothing about regular ones.

Indeed, the obstruction found here lives exactly where the ``almost'' in \cite{Deng20} lives, and this can be made precise. Let $X_k=P_kT_X$ be the Demailly--Semple tower with projections $\pi_{k,j}$, and let $D_k=P(T_{X_{k-1}/X_{k-2}})\subset X_k$ be the vertical divisor, the zero divisor of the natural morphism $\cO_{X_k}(-1)\to\pi_k^*\cO_{X_{k-1}}(-1)$ \cite[\S6]{Dem97}, \cite[\S4.2]{Div08}; it is contained in the locus $X_k^{\rm sing}$ of singular jets. Let $F_j\Ez{k}{m}$ be the subbundle of jet differentials of degree $\le j$ in $f^{(k)}$, the filtration used throughout this note.
\end{remark}

\begin{proposition}\label{prop:divisor}
For every $k\ge2$, $m\ge1$ and $0\le j\le m/k$,
\[
F_j\Ez{k}{m}=\pi_{k,0\,*}\big(\cO_{X_k}(m)\otimes\cO_{X_k}(-(m-j)D_k)\big).
\]
Consequently, if $q$ is the maximal degree in $f^{(k)}$ of an invariant of weight $m$, the extremal quotient $\Ez{k}{m}/F_{q-1}$ is the image of $\Ez{k}{m}$ in $\pi_{k,0\,*}\big(\cO_{X_k}(m)\otimes\cO_{(m-q+1)D_k}\big)$: it is the restriction of the jet differentials to the $(m-q+1)$-st infinitesimal neighbourhood of the vertical divisor $D_k\subset X_k^{\rm sing}$.
\end{proposition}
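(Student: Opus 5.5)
The plan is to compare two subsheaves of $\Ez{k}{m}=\pi_{k,0\,*}\cO_{X_k}(m)$: the subbundle $F_j$, defined by the degree in $f^{(k)}$, and the subsheaf of sections vanishing to order $\ge m-j$ along $D_k$. Both are saturated in $\Ez{k}{m}$. For $F_j$ this holds because it is a subbundle. For the other it holds because vanishing along a divisor is a closed condition, and $\pi_{k,0}$ is proper with connected fibres. So it is enough to prove that they coincide over a dense open subset of $X$, and in fact to compare vanishing orders fibrewise. I would use Demailly's description of sections of $\cO_{X_k}(m)$ as invariant polynomials \cite[Thm.~6.8]{Dem97}. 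Under this description, a section of $\cO_{X_k}(m)$ over the fibre of $X_k\to X$ at a point $x$ is the function $j_kf\mapsto P(j_kf)$ on regular $k$-jets, homogeneous of degree $m$ under the scaling $f\mapsto f(\lambda t)$.

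\emph{Step 1 (normal form of the jets).} Work near a point of $X_{k-1}$ lying over the regular locus, so that one may assume $f_1'(0)\neq0$. By reparametrising, one may further assume $f_1(t)=t$. An invariant $P$ of weight $m$ is then determined by the polynomial $p(g',\dots,g^{(k)})=P(j_k(t,g_2,\dots,g_n))$, where the $j_kg$ is the jet of the remaining coordinates at $t=0$. General jets are recovered through $f_1'^{\,m}$ and the reparametrisation formulas, and the degree of $P$ in $f^{(k)}$ equals the degree of $p$ in $g^{(k)}$. The reason is that $f_i^{(k)}$ enters the normalised variables only through the affine-linear expression $(f_1'f_i^{(k)}-f_i'f_1^{(k)})/f_1'^{\,k+1}$ plus terms of lower order.

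\emph{Step 2 (the fibre of $X_k\to X_{k-1}$).} Over the chosen point, the fibre $P(V_{k-1})\cong\PP^{n-1}$ is the projectivised space of directions $(\xi_0:\xi_2:\dots:\xi_n)$ of the lifted $(k-1)$-st curve. In these coordinates $\xi_0$ is the component along $f_1$ (the direction that projects nontrivially to $X_{k-1}$), while the $\xi_i$ are, up to the rescaling coming from Step~1, the components $g_i^{(k)}$. The vertical divisor $D_k$, being the zero locus of $\cO_{X_k}(-1)\to\pi_k^*\cO_{X_{k-1}}(-1)$, meets the fibre in the hyperplane $\{\xi_0=0\}$. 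In the affine chart $\xi_0=1$ the coordinates are $g^{(k)}$. Since $\cO_{X_k}(m)$ restricts to $\cO(m)$ on this fibre, the section given by $P$ is the homogenisation $\xi_0^{m}\,p(\dots,\xi/\xi_0)$. It therefore vanishes along $\{\xi_0=0\}$ to order exactly $m-\deg_{g^{(k)}}p=m-\deg_{f^{(k)}}P$. The hypothesis $j\le m/k$ guarantees that the relevant degrees stay in the range where this homogenisation makes sense, because a polynomial of weight $m$ has degree at most $m/k$ in $f^{(k)}$.

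\emph{Step 3 (globalisation and the consequence).} Vanishing to order $\ge m-j$ along $D_k$ can be checked on a dense open subset of $D_k$, and $D_k$ dominates the relevant part of $X_{k-1}$. Steps~1--2 therefore show that the two saturated subsheaves agree over the regular locus, hence everywhere. For the second assertion, push forward the exact sequence
\[
0\to\cO_{X_k}(m)\otimes\cO_{X_k}\big(-(m-q+1)D_k\big)\to\cO_{X_k}(m)\to\cO_{X_k}(m)\otimes\cO_{(m-q+1)D_k}\to0
\]
and use left exactness of $\pi_{k,0\,*}$ together with the first part for $j=q-1$. This identifies $\Ez{k}{m}/F_{q-1}$ with the image of $\Ez{k}{m}$ in $\pi_{k,0\,*}(\cO_{X_k}(m)\otimes\cO_{(m-q+1)D_k})$.

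I expect the main obstacle to be Step~2, namely the precise identification of the fibre coordinates of $P(V_{k-1})$ with the normalised $k$-th derivatives, and of $D_k$ with $\{\xi_0=0\}$. For this I would try Demailly's explicit affine charts of the tower \cite[\S5--6]{Dem97}, together with an induction on $k$ along $\pi_k$. The key check is that the chain-rule terms of lower order in $f^{(k)}$ correspond to factors of $\xi_0$ and not to other vanishing.
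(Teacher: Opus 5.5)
Your argument is correct in outline, but it takes a different route from the paper. The paper does no computation on the tower. It quotes Demailly's direct image formula \cite[Prop.~6.16~i)]{Dem97} for the weighted bundles $\cO_{X_k}(a)$, which holds for all $a\in\ZZ^k$, and specialises it to $a=(0,\dots,0,m-j,j)$. Then $b=(0,\dots,0,m-j,m)$ gives $\cO_{X_k}(a)=\cO_{X_k}(m)\otimes\cO_{X_k}(-(m-j)D_k)$, and Demailly's partial-degree conditions reduce to $\ell_k\le j$; the conditions for $s\le k-2$ are automatic because $\ell_1+2\ell_2+\dots+k\ell_k=m$.

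You instead reprove this special case of the formula by hand. You normalise $f_1(t)=t$, identify the affine chart $P(V_{k-1,w})\setminus D_k$ with the normalised $k$-th derivatives, and read the order of $\sigma_P$ along $D_k$ off the dehomogenisation. Your version is self-contained and shows \emph{why} the order along the vertical divisor measures the degree in $f^{(k)}$. The citation is two lines and treats all weights $a$ at once.

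The step you flag as the main obstacle is not hard:
\begin{itemize}
\item Once the $(k-1)$-jet is fixed, $f_{[k-1]}'(0)$ is affine in $f^{(k)}(0)$ by the chain rule.
\item Its coordinate cutting out $D_k$ is $\xi_0=d\pi_{k-1}(f_{[k-1]}'(0))=f_{[k-2]}'(0)\in\cO_{X_{k-1}}(-1)$, which depends only on the $(k-1)$-jet. So $\xi_0$ is the component projecting nontrivially to $X_{k-2}$, not to $X_{k-1}$ as you wrote.
\item Distinct normalised regular $k$-jets give distinct points of $X_k$, because $X_k^{\rm reg}$ is the space of regular $k$-jets modulo reparametrisation.
\end{itemize}
Hence the chart coordinates are an affine isomorphism of $\CC^{n-1}$ with the $g^{(k)}$, and degrees are preserved.

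Two minor corrections. First, on an individual fibre the order is only $\ge m-\deg_{f^{(k)}}P$, since the top coefficients of $p$ may vanish at special lower jets. Equality holds on generic fibres, and that is exactly what your Step~3 uses. Second, the hypothesis $j\le m/k$ plays no role in the homogenisation, which always makes sense because $\deg p\le m$.
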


\begin{proof}
Apply the direct image formula \cite[Prop.~6.16~i)]{Dem97}, valid for every $a\in\ZZ^k$, to $a=(0,\dots,0,m-j,j)$. With $b_i=a_1+\dots+a_i$ one has $b=(0,\dots,0,m-j,m)$, so $\cO_{X_k}(a)=\cO_{X_k}(m)\otimes\cO_{X_k}(-b\cdot D^\star)=\cO_{X_k}(m)\otimes\cO_{X_k}(-(m-j)D_k)$, and the conditions $\ell_{s+1}+2\ell_{s+2}+\dots+(k-s)\ell_k\le a_{s+1}+\dots+a_k$ reduce to $\ell_k\le j$ for $s=k-1$, the others being automatic since $\ell_1+2\ell_2+\dots+k\ell_k=m$. The last assertion follows by applying $\pi_{k,0\,*}$ to $0\to\cO_{X_k}(m)(-(m-q+1)D_k)\to\cO_{X_k}(m)\to\cO_{X_k}(m)\otimes\cO_{(m-q+1)D_k}\to0$ and using that $\pi_{k,0\,*}$ is left exact.
\end{proof}

\begin{remark}[The obstruction lives along the vertical divisor]\label{rem:vertical}
Thus the quotients on which our obstruction is read off are precisely the traces of the jet differentials along a thickening of the vertical divisor, which lies in $X_k^{\rm sing}$; on the regular jets, where Deng's positivity lives, they carry no information. This is consistent with \cite[Thm.~6.8~iii)]{Dem97}, according to which the relative base locus of $|\cO_{X_k}(m)|$ over $X$ is precisely $X_k^{\rm sing}$, and with \cite[Prop.~6.16]{Dem97}, according to which $\cO_{X_k}(1)$ is relatively big but never relatively nef over $X$ for $k\ge2$, which is why one works with the weighted bundles $\cO_{X_k}(a)$ in the first place. In this sense the theorems should be read as a confirmation that ordinary ampleness of $\Ez{k}{m}$ is the wrong positivity notion for jet bundles of order $\ge2$, rather than as a failure of the geometric content of the conjecture.
\end{remark}

\begin{remark}[Order two, and consistency with the threshold]\label{rem:ordertwo}
The same argument applies to order $2$ in every dimension. Since the invariant $2$-jet algebra is generated by the $v_i$ and the $W_{ij}$ (see \S\ref{sec:general}), for $m=3w$ the extremal quotient for the $a$-filtration of $\Ez{2}{3w}$ is spanned by products of $w$ symbols $v_ia_j-v_ja_i$, hence is isomorphic to $\Sch{(w,w,0,\dots,0)}\OmZ$ (two rows). By Theorem~\ref{thm:BR}, $\Ez{2}{3w}$ is not ample on any smooth complete intersection with $3c<N$. This is exactly complementary to the range $N\le3c$ in which Conjecture~\ref{conj:DT} predicts ampleness for $k=2$, so at order $2$ the two-row obstruction is sharp with respect to the threshold, and there is no contradiction. For $k=1$ the argument reduces to the classical fact that $\Sym^m\OmZ$ is not ample when $2c<N$, since it has no sections.
\end{remark}

\begin{remark}[Dimension two]\label{rem:dim2}
No obstruction of this kind exists on surfaces. Theorem~\ref{thm:BR} requires $\lambda'_1<\dim Z=2$, i.e.\ one row, whereas the extremal quotients produced above have two rows, and on a surface every irreducible constituent of any jet bundle has at most two rows. Concretely, for a smooth surface $X\subset\PP^3$ of degree $d$: the extremal quotient of $\E{2}{3w}$ is $\Sch{(w,w)}\Om=K_X^{\,w}$, ample for $d\ge5$; that of $\E{3}{5q}$ is $\Sch{(2q,q)}\Om=\Sym^q(\Om\otimes K_X)$, and $\Om\otimes K_X=\Om(2)\otimes\cO_X(d-6)$ is ample for $d\ge7$, because $\Om(2)$ is globally generated as a quotient of $\Omega^1_{\PP^3}(2)|_X$ (cf.\ the proof of \cite[Lemma~3]{Div08}); and, using Merker's decomposition of $E_{4,m}$ in dimension $2$ \cite[\S1]{Mer10}, that of $\E{4}{7q}$ is $\Sch{(3q,q)}\Om=\Sym^{2q}\Om\otimes K_X^{\,q}=\Sym^{2q}(\Om(2))\otimes\cO_X(q(d-8))$, ample for $d\ge9$. (For $d=7,8$ this last bundle is not ample on every smooth surface: if $X$ contains a line $L$, the quotient $\Om|_L\twoheadrightarrow\Omega^1_L=\cO_L(-2)$ induces a quotient $\cO_L(q(d-8))$ of its restriction to $L$, of degree $\le0$; and the Fermat surfaces of degree $7$ and $8$ contain lines.) None of this affects the point of the remark: no obstruction to ampleness of jet bundles on surfaces comes from the extremal quotients, consistently with the threshold $k\ge2$ of Conjecture~\ref{conj:DT} for surfaces in $\PP^3$, about which the present method says nothing.
\end{remark}

\begin{remark}[Orders $k\ge5$]\label{rem:general}
Two of the ingredients used above are available in general, and one is not.
\begin{enumerate}
\item \emph{Invariants of weight $2k-1$, linear in $f^{(k)}$, exist for all $k$ and $n$.} Demailly observes in the proof of \cite[Thm.~6.8~i)]{Dem97} that $(f_r')^{2k-1}g_i^{(k)}$, where $g=f\circ\varphi$ is the reparametrisation with $g_r(\tau)=\tau$, is an invariant polynomial of weight $2k-1$, and its leading term in $f^{(k)}$ is $(f_r')^{k-2}\big(f_i^{(k)}f_r'-f_r^{(k)}f_i'\big)$. Merker constructs the same invariants $\Lambda^{2k-1}_{1,i;1^{k-2}}$ by iterated bracketing with $f_1'$ \cite[\S5, ``Arbitrary dimension'']{Mer10}. Products of $q$ of their leading terms and of their $\GL_n$-polarisations span the two-row irreducible summand $\Sch{((k-1)q,q,0,\dots,0)}\OmZ$ of $\Sym^{(k-1)q}\OmZ\otimes\Sym^q\OmZ$, by the argument of Remark~\ref{rem:schur} (the polarisations are needed: for $k=3$, $n=3$, $q=1$ the displayed terms alone span a space of dimension $6<8=\dim\Sch{(2,1,0)}$). In particular, whenever the invariants of weight $2k-1$ which are linear in $f^{(k)}$ are known to be spanned by the polarisations of $\Lambda^{2k-1}$ (as is the case for $k\le4$ by \S\S\ref{sec:general}--\ref{sec:k4}), the bundle $E_{k,2k-1}T^*_Z$ has the two-row quotient $\Sch{(k-1,1,0,\dots,0)}\OmZ$, since $\deg_{f^{(k)}}\le1$ at weight $2k-1$ for weight reasons, and is not ample when $3c<N$.
\item \emph{The degree bound $\deg_{f^{(k)}}P\le\operatorname{wt}(P)/(2k-1)$ holds in every case where generators are known}: for $k=2,3$ in all dimensions (\S\ref{sec:general}), for $k=4$ in all dimensions $n\ge4$ (\S\ref{sec:k4}) and $n\le3$ (by restriction), and for $(n,k)=(2,5)$: among Merker's $17$ generating bi-invariants \cite[\S10]{Mer10} (see also \cite{Mer08}), the five of weight $\le8$ are order-four invariants and do not involve $f^{(5)}$ at all, $\Lambda^9$ is linear in $f^{(5)}$, and the degree in $f^{(5)}$ of each remaining generator is read off from its defining syzygy (for instance $f_1'X^{18}\equiv-5\Lambda^9M^{10}+56\Lambda^7K^{12}$ gives degree $\le2$ for $X^{18}$, and $f_1'Y^{27}\equiv M^{10}X^{18}-56K^{12}F^{16}$ gives degree $\le3$ for $Y^{27}$); in every case one finds $\deg_{f^{(5)}}\le\operatorname{wt}/9$, with equality exactly for $\Lambda^9$, $X^{18}$ and $Y^{27}$.
\item \emph{What is missing for $k\ge5$ and $n\ge3$.} No generating set of the invariant algebra is available, so neither the degree bound nor the structure of the leading parts of the extremal generators is known. The dimension-$2$ computation for $k=5$ shows that the extremal ratio $1/9$ is attained, besides $\Lambda^9$, by $X^{18}$ and $Y^{27}$; by analogy with $k=4$ one expects their leading parts to avoid one of the derivatives (Remark~\ref{rem:whyrows}), which would give, for hypersurfaces in $\PP^6$, an extremal quotient of $E_{5,9q}$ with at most four rows and hence a contradiction with reading (A) at $k=5$, $N=6$, $c=1$. We have no proof of this.
\end{enumerate}
\end{remark}

\end{document}